\numberwithin{equation}{section}
\theoremstyle{plain}
\newtheorem{thm}[subsection]{Theorem}
\newtheorem{prop}[subsection]{Proposition}
\newtheorem*{thm*}{Theorem}
\newtheorem{prop-def}[subsubsection]{Proposition-Definition}
\theoremstyle{definition}
\newtheorem*{defn*}{Definition}
\theoremstyle{remark}
\newtheorem{rem}[subsection]{Remark}
\newtheorem{s-example}[subsection]{Example}
\numberwithin{equation}{subsection}
\DeclareSymbolFont{cyrletters}{OT2}{wncyr}{m}{n}
\DeclareMathSymbol{\sha}{\mathalpha}{cyrletters}{"58}
\renewcommand{\AA}{\mathcal{A}}
\newcommand{\CC}{\mathcal{C}}
\newcommand{\WW}{{\mathcal{W}}}
\newcommand{\OO}{\mathcal{O}}
\newcommand{\Fp}{{\mathbb{F}_p}}
\newcommand{\Z}{\mathbb{Z}}
\newcommand{\<}{\langle}
\renewcommand{\>}{\rangle}
\newcommand{\into}{\hookrightarrow}
\newcommand{\isoto}{\,\tilde{\to}\,}
\newcommand{\nodiv}{\not|}
\def\nodiv{\mathrel{\mathchoice{\not|}{\not|}{\kern-.2em\not\kern.2em|}
{\kern-.2em\not\kern.2em|}}}
\newcommand{\G}{\mathbb{G}}
\newcommand{\labeledto}[1]{\overset{#1}{\to}}
\DeclareMathOperator{\im}{Im}
\DeclareMathOperator{\Ker}{Ker}
\DeclareMathOperator{\Hom}{Hom}
\def\clap#1{\hbox to 0pt{\hss#1\hss}}
\newcommand*\bigcdot{\mathpalette\bigcdot@{.5}}
\newcommand*\bigcdot@[2]{\mathbin{\vcenter{
               \hbox{\scalebox{#2}{$\m@th#1\bullet$}}}}}
\newcommand{\D}{\mathbb{D}}
\renewcommand{\and}{\quad\text{and}\quad}
\begin{document}
\title{Every $BT_1$ group scheme appears
in a Jacobian}

\author{Rachel Pries}
\address{Department of Mathematics \\ Colorado State University
  \\ Fort Collins, CO~~80523 USA}
\email{pries@math.colostate.edu}
\author{Douglas Ulmer}
\address{Department of Mathematics \\ University of Arizona
 \\ Tucson, AZ~~85721 USA}
\email{ulmer@math.arizona.edu}

\date{\today}

\subjclass[2010]{Primary 11D41, 11G20, 14F40, 14H40, 14L15;
Secondary 11G10, 14G17, 14K15, 14H10}

\keywords{Curve, finite field, Jacobian, abelian variety, Fermat
  curve, Frobenius, Verschiebung, group scheme,
   de Rham cohomology, Dieudonn\'e module}

\begin{abstract}
  Let $p$ be a prime number and let $k$ be an algebraically closed
  field of characteristic $p$.  A $BT_1$ group scheme over $k$ is a
  finite commutative group scheme which arises as the kernel of $p$ on
  a $p$-divisible (Barsotti--Tate) group.  Our main result is that
  every $BT_1$ scheme group over $k$ occurs as a direct factor of the
  $p$-torsion group scheme of the Jacobian of an explicit curve
  defined over $\Fp$.  We also treat a variant with polarizations.
  Our main tools are the Kraft classification of $BT_1$ group schemes,
  a theorem of Oda, and a combinatorial description of the de Rham
  cohomology of Fermat curves.
  \end{abstract}




\maketitle

\section{Introduction}
Fix a prime number $p$ and let $k$ be an algebraically closed field of
characteristic $p$.  A \emph{$BT_1$ group scheme} over $k$ is a finite
commutative group scheme which is the kernel of $p$ on a $p$-divisible
group.  (The term $BT_1$ stands for Barsotti--Tate truncated at level
1, and Barsotti--Tate is a synonym for $p$-divisible.)  These are the
finite commutative group schemes killed by $p$ which also satisfy
$\Ker F=\im V$ and $\Ker V=\im F$ where $F$ and $V$ are the Frobenius
and Verschiebung maps respectively.  The simplest $BT_1$ group
schemes are $\Z/p\Z$ and $\mu_p$.

We also consider \emph{polarized $BT_1$ group schemes} over $k$,
i.e., $BT_1$ group schemes $G$ with a pairing that induces a
non-degenerate, alternating pairing on the Dieudonn\'e module of $G$,
as in \cite[\S9]{Oort01}.  If $A$ is a principally polarized abelian
variety of dimension $g$ over $k$ , its $p$-torsion subscheme $A[p]$
is naturally a polarized $BT_1$ group scheme of order $p^{2g}$.

If $C$ is a smooth irreducible projective curve of genus $g$ over
$k$, then its Jacobian ${\rm Jac}(C)$ is a principally polarized
abelian variety of dimension $g$, and thus $G={\rm Jac}(C)[p]$ is a
polarized $BT_1$ group scheme of rank $p^{2g}$.  By a result of Oda
\cite{Oda69}, the de Rham cohomology of $C$ over $k$ determines the
isomorphism class of $G$ uniquely via its Dieudonn\'e module.

In general, it is not known which polarized $BT_1$ group schemes occur
for Jacobians of curves.  In fact, there are very few examples of
curves for which the isomorphism class of ${\rm Jac}(C)[p]$ has been
computed.  Our main result is:

\begin{thm}\label{thm:main}\mbox{}
\begin{enumerate}
\item Every $BT_1$ group scheme over $k$ appears as a direct factor of
  ${\rm Jac}(C)[p]$ for an explicit curve $C$ defined over $\Fp$.
\item Every polarized $BT_1$ group scheme over $k$ appears as a direct
  factor \textup{(}with pairing\textup{)} of ${\rm Jac}(C)[p]$
  for an explicit curve $C$ defined over $\Fp$.
\item In particular, if $G$ is an indecomposable $BT_1$ group scheme
  of rank $p^{\ell}$ with $\ell>1$, or if $G$ is an indecomposable
  polarized $BT_1$ group scheme of rank $p^{\ell}$ with $\ell>2$, then
  the curve $C$ in part \textup{(1)} or part \textup{(2)} can be
  chosen to have genus $\le(p^{\ell}-2)/2$.
\end{enumerate}
\end{thm}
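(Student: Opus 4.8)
The plan is to pass, via Oda's theorem, from $BT_1$ group schemes to de Rham cohomology with its Frobenius and Cartier operators, and then to realize every prescribed Dieudonn\'e module as a direct summand of the de Rham cohomology of a carefully chosen quotient of a Fermat curve. Recall Kraft's classification: every $BT_1$ group scheme over $k$ is a direct sum of indecomposables; the indecomposables of rank $p$ are $\Z/p\Z$ and $\mu_p$, and those of rank $p^\ell$ with $\ell\ge2$ correspond to the primitive cyclic words of length $\ell$ in the alphabet $\{F,V\}$, the word recording, along the distinguished cyclic basis of the Dieudonn\'e module, which of $F$ and $V$ is the connecting isomorphism at each step (the $BT_1$ relations $\Ker F=\im V$, $\Ker V=\im F$ forcing exactly one of them there). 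By Oda's theorem the Dieudonn\'e module of ${\rm Jac}(C)[p]$ is $H^1_{\mathrm{dR}}(C/k)$ equipped with the operators induced by the absolute Frobenius and the Cartier operator, so it suffices, given a word $w$, to produce a curve $C/\Fp$ and a sub-Dieudonn\'e-module of $H^1_{\mathrm{dR}}(C/k)$, a direct summand as such, on which $F$ and $V$ realize $w$; for a general $G$ one needs the words of all its indecomposable factors realized simultaneously inside one $H^1_{\mathrm{dR}}(C/k)$.

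I would take for $C$ a curve $C_{N,a,b}\colon y^N=x^a(1-x)^b$ over $\Fp$ with $p\nmid N$; these are quotients of Fermat curves, so their de Rham cohomology is governed by the combinatorial description of the de Rham cohomology of Fermat curves. The group $\mu_N$ acts, giving $H^1_{\mathrm{dR}}(C_{N,a,b}/k)=\bigoplus_{t\in\Z/N\Z}H^1_{\mathrm{dR}}(C_{N,a,b})_t$, where $F$ sends the $t$-eigenspace into the $pt$-eigenspace and $V$ sends it into the $p^{-1}t$-eigenspace; under a mild coprimality hypothesis on $N,a,b$ each eigenspace with $t$ prime to $N$ is one-dimensional, and the combinatorial description tells exactly when each connecting map vanishes. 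Grouping eigenspaces into orbits of multiplication by $p$ thus presents $H^1_{\mathrm{dR}}(C_{N,a,b}/k)$ as a direct sum, over such orbits $O=\{t,pt,p^2t,\dots\}$, of sub-Dieudonn\'e-modules, and the word attached to $O$ is read off from explicit congruences in $t,a,b,N$. To realize an indecomposable $G$ of rank $p^\ell$ I would take $N\mid p^\ell-1$ so that a suitable orbit has length exactly $\ell$ — e.g. the orbit of $1$ when $N=p^\ell-1$, since the order of $p$ modulo $p^\ell-1$ is exactly $\ell$ — and choose $a,b$ so that the word on that orbit is $w$; for general $G$ one takes $N$ divisible by each $p^{\ell_j}-1$ (or uses the Chinese Remainder Theorem) so that a single curve carries all the factors in distinct orbits, with multiplicities absorbed by taking $N$ larger. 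Since the genus of $C_{N,a,b}$ is $\tfrac12\bigl(N+2-\gcd(N,a)-\gcd(N,b)-\gcd(N,a+b)\bigr)$, taking $N=p^\ell-1$ and choosing $a,b$ with those greatest common divisors as large as the parity of $N$ permits gives $g\le(p^\ell-2)/2$, which is part (3) in the unpolarized case.

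For the polarized statement, use that ${\rm Jac}(C_{N,a,b})$ is principally polarized, so cup product makes $H^1_{\mathrm{dR}}(C_{N,a,b}/k)$ self-dual; the pairing matches the $t$-eigenspace with the $(-t)$-eigenspace and intertwines $F$ on one factor with $V$ on the other, which is precisely Oort's description of the canonical pairing on the Dieudonn\'e module. Hence an orbit $O$ with $O=-O$ contributes a polarized direct factor whose word is symmetric, while a pair of distinct orbits $O,-O$ contributes a polarized direct factor $G_v\oplus G_v^{\vee}$; as Oort's classification says every indecomposable polarized $BT_1$ group scheme is of one of these two shapes, it remains to arrange the self-dual orbit. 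For a symmetric word of length $\ell$ one wants $N\mid p^\ell-1$ with $-1$ in the $p$-orbit of $1$ (for instance $N=p^{\ell/2}+1$, when this has the right behaviour), and the split type of rank $p^\ell$ reduces to the unpolarized case of rank $p^{\ell/2}$, which is why in part (3) the polarized bound needs only $\ell>2$. The symmetry $x\mapsto1-x$ of $C_{N,a,b}$, which interchanges $a$ and $b$, should supply the self-dual identifications and pin down the induced pairing.

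The hard part will be the combinatorial core: to show that as $a,b$ vary, the words read off from the vanishing conditions on a length-$\ell$ orbit exhaust all primitive cyclic words of length $\ell$ — and, in the polarized case, that a prescribed symmetric word is realized together with its canonical pairing — all while keeping the eigenspaces along the chosen orbit one-dimensional and the genus within the stated bound. I expect this to reduce, by writing residues modulo $p^\ell-1$ in base $p$ so that multiplication by $p$ becomes a cyclic shift of the $\ell$ base-$p$ digits, to an explicit dictionary between the digit patterns of $a$ and $b$ (together with the carries occurring in $a+b$) and the letters of $w$; verifying that this dictionary is surjective, and compatible with the pairing in the polarized case, is the real content.
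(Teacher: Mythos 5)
Your strategy is the paper's: pass via Oda's theorem to $H^1_{dR}$, decompose into $\mu_N$-eigenspaces permuted by multiplication by $p$, read off Kraft words from orbits, and realize words by controlling base-$p$ digits for $N=p^\ell-1$. (The paper uses $a=b=1$, i.e., $y^d=x(1-x)$, plus the full Fermat curve when $p\le 3$; your extra parameters $a,b$ play the role of the second coordinate of the Fermat curve's character set.) But as written the proposal has a genuine gap at exactly the point you flag as ``the hard part'': you never show that every primitive cyclic word of length $\ell$ actually occurs as the word of some orbit. This is the crux, and it has a short proof you should supply: for $d=p^\ell-1$, multiplication by $p$ cyclically shifts the $\ell$ base-$p$ digits, membership in $S_f$ versus $S_v$ is decided by comparing the first digit (scanning leftward from the top, wrapping around) that differs from $(p-1)/2$ against $(p-1)/2$, and one realizes a given primitive word by assigning digit $0$ to each $v$ and digit $p-1$ to each $f$. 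Without this, there is no theorem.

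A second, more serious gap concerns multiplicities and small primes. Kraft's classification gives $M(G)\cong\oplus M(w_i)^{e_i}$, and to realize $M(w_i)^{e_i}$ as a direct factor you must either find $e_i$ distinct orbits with word $w_i$ or a single orbit with the non-primitive word $w_i^{e_i}$ (using $M(w^e)\cong M(w)^e$); your phrase ``multiplicities absorbed by taking $N$ larger'' hides the fact that this fails for small $p$. For the curves $y^N=x(1-x)$ the word $(fv)^e$, $e>1$, is never realized when $p=3$, and no proper power $({w'})^e$ is realized when $p=2$; realizing non-primitive words for $p>3$ requires a further digit trick (replace a $0$ digit by $1$, which uses $1<(p-1)/2$). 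Moreover, for $p=2$ the factors $\Z/2\Z$ and $\mu_2$ cannot be obtained from \emph{any} quotient of a Fermat curve of odd degree (these Jacobians have $2$-rank zero), so your family $y^N=x^a(1-x)^b$ cannot prove part (1) in full for $p=2$; the paper has to adjoin an auxiliary ordinary Artin--Schreier-type curve $(x^2-x)(z^r-1)=1$ via a fiber product. Finally, in the polarized case your idea of taking $N=p^{\ell/2}+1$ to force $-1$ into the orbit of $1$ is unnecessary and unverified: for an indecomposable polarized $G$ that splits as $M(w)\oplus M(w^c)$ one simply uses the orbits of $a$ and $-a$ in $\Z/(p^{\ell/2}-1)\Z$ together with Oort's uniqueness of the polarization on a self-dual $BT_1$ module.
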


We prove this theorem in Section~\ref{s:proof}.  
Parts (1) and (2) are essentially equivalent, but
neither implication is immediate.  In part (3), a polarized $BT_1$
group scheme is indecomposable if it is not the orthogonal direct sum
of two proper polarized subgroup schemes.  The restrictions on $\ell$
in (3) are not significant, because the omitted groups are known to
appear in elliptic curves.

A weaker version of parts (1) and (2) follows from the fact that every
abelian variety appears as a subvariety of a Jacobian together with
the non-emptiness of each E--O stratum of $\AA_g$; see
Remark~\ref{rem:otherway}.  Our proof of Theorem~\ref{thm:main} is
more elementary, and it yields a stronger result because: (i) there
are no conditions on $p$; (ii) the curve $C$ is explicit and its field
of definition is $\Fp$; (iii) the genus of $C$ is bounded in terms of
the rank of $G$; and (iv) the other group schemes that occur in
${\rm Jac}(C)[p]$ can be explicitly computed.

In almost all cases the ``explicit curve'' of the theorem can be taken
to be a quotient of a Fermat curve.  Fermat curves are a natural class
of curves to consider because their de Rham cohomology, with its
Frobenius and Verschiebung operators, admits a simple combinatorial
description.  A result of independent interest in this paper is that
we determine the structure of the $BT_1$ module for the Jacobian of
the Fermat curve $F_d$ of degree $d$, for all positive integers $d$
that are relatively prime to $p$, see Theorem~\ref{thm:BT1Fd}.
This complements work of Yui, who determined the Newton
polygons of Fermat curves \cite[Thm.~4.2]{Yui80}.

Our arguments use parts of three classifications of $BT_1$ group
schemes (in terms of words, the ``canonical filtration'', and
permutations), largely due to Kraft, Ekedahl, and Oort.  In a
companion paper \cite{PriesUlmerFEO}, we provide a complete
translation between these classifications, and we apply them to give a
detailed study of the $p$-torsion subgroups of Jacobians of Fermat
curves, including well-known invariants like the $p$-rank and
$a$-number, as well as two other invariants related to supersingular
elliptic curves.

\subsection*{Acknowledgements} 
Author RP was partially supported by NSF grant DMS-1901819, and
author DU was partially supported by Simons Foundation grants 359573
and 713699.

\section{Groups and modules}

\subsection{Dieudonn\'e modules}

We refer to \cite{Fontaine77} for background on contravariant
Dieudonn\'e theory for finite group schemes of $p$-power order over
$k$ and for $p$-divisible groups.

Write $\sigma$ for the absolute Frobenius of $k$ and extend it to
the Witt vectors $W(k)$.  Define the \emph{Dieudonn\'e ring}
$\D=W(k)\{F,V\}$ as the $W(k)$-algebra generated by $F$ and $V$ with
relations
\begin{equation*}
  FV=VF=p, \ F\alpha=\sigma(\alpha)F,
  \text{ and } \alpha V=V\sigma(\alpha) \text{ for } \alpha\in W(k).
\end{equation*}  Let
$\D_k=\D/p\D\cong k\{F,V\}$.

If $G$ is a finite, commutative group scheme over $k$ of $p$-power
order, then its \emph{Dieudonn\'e module} $M(G)$ is the left
$\D$-module of homomorphisms of $k$-group schemes from $G$ to the
co-Witt vectors.  The functor $G\leadsto M(G)$ is contravariant and
induces an anti-equivalence between the category of finite group
schemes of $p$-power order over $k$ and the category of left
$\D$-modules that are of finite length as $W(k)$ modules
\cite[III.1.4]{Fontaine77}.

\subsection{$BT_1$ group schemes and $BT_1$ modules} 
By definition, a \emph{$BT_1$ group scheme} over $k$ is a finite
commutative group scheme $G$ that is killed by $p$ and that has the
properties
\[\Ker(F:G\to G)=\im(V:G\to G)\quad\text{and}\quad
  \im(F:G\to G)=\Ker(V:G\to G).\]
The notation $BT_1$ is an abbreviation of ``Barsotti--Tate of level 1''.

By definition, a \emph{$BT_1$ module} over $k$ is a $\D_k$-module $M$
of finite dimension over $k$ such that
\[\Ker(F:M\to M)=\im(V:M\to M)\quad\text{and}\quad
  \im(F:M\to M)=\Ker(V:M\to M).\] A $\D_k$-module $M$ is a $BT_1$
module if and only if $M=M(G)$ for a $BT_1$ group scheme $G$ over $k$.

The group schemes $\Z/p\Z$ and $\mu_p$ are $BT_1$ group schemes.  So
is $G_{1,1}$, the kernel of $p$ on a supersingular elliptic curve over
$k$.  On the other hand, $\alpha_p$ is not.

\subsection{Duality}
If $G$ is a finite, commutative group scheme over $k$, define its
\emph{Cartier dual} $G^D$ as $G^D:=\Hom_{k-Gr}(G,\G_m)$,
(homomorphisms of $k$-group schemes)
where $\G_m$ is the multiplicative group over $k$.  
A $BT_1$ group scheme $G$ is
\emph{self-dual} if $G\cong G^D$.  

If $M$ is a left $\D$-module of finite length over $W(k)$, define its
\emph{dual module} $M^*$ as follows: If $M$ is killed by $p^n$, set
$M^*=\Hom_{W(k)}(M,W_n(k))$ with $(F\phi)(m)=\sigma(\phi(Vm))$ and
$(V\phi)(m)=\sigma^{-1}(\phi(Fm))$ for all $\phi\in M^*$ and $m\in M$.
A $BT_1$ module $M$ is \emph{self-dual} if $M\cong M^*$.

A basic result of Dieudonn\'e theory
\cite[\S III.5.3]{Fontaine77} is that $M(G^D)\cong M(G)^*$.  In
particular, $G$ is self-dual if and only if $M(G)$ is self-dual.

\subsection{Polarized $BT_1$ group schemes}  
A \emph{polarized $BT_1$ module} is a $BT_1$ module $M$ equipped with
a non-degenerate, alternating pairing
$\<\cdot, \cdot\>:M\times M\to k$ of Dieudonn\'e modules (i.e., such
that $\<x,x\>=0$ and $\<Fx,y\>=\<x,Vy\>^p$ for all $x,y\in M$).
Clearly, a polarized $BT_1$ module is self-dual.

A \emph{polarized $BT_1$ group scheme} is a $BT_1$ group scheme $G$
equipped with a bilinear form with the property that the induced form
on $M(G)$ is non-degenerate and alternating.  (The reason for this
unusual definition is that when $p=2$, an alternating form on $G$ need
not induce an alternating form on $M(G)$.  See
\cite[p.~346]{Oort01}.)

Oort proved \cite[\S\S2, 5, 9]{Oort01} (see also
\cite[Cor.~4.2.3]{PriesUlmerFEO}) that any self-dual $BT_1$ module can
be given a unique polarization:

\begin{prop}\label{prop:self-dual=>polarized}
  Every self-dual $BT_1$ module admits a polarization, i.e., a
  non-degenerate \emph{alternating} pairing, and this pairing is
  unique up to \textup{(}non-unique\textup{)} isomorphism.
\end{prop}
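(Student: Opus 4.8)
The plan is to reduce to the theory of symmetric bilinear forms over the finite field $k$ and to exploit the Dieudonn\'e‐theoretic structure. First I would decompose a self‐dual $BT_1$ module $M$ into indecomposable pieces using the Kraft classification: each indecomposable $BT_1$ module over $k$ is described by a cyclic word in $F$ and $V$, and duality $M\leadsto M^*$ acts on these words in a combinatorial way (reversing the word and swapping the roles of $F$ and $V$). Since $M\cong M^*$, the indecomposable summands are permuted by duality, and they fall into two types: self‐dual summands (fixed by the duality), and pairs $\{N,N^*\}$ with $N\not\cong N^*$. On a pair $N\oplus N^*$ there is an obvious hyperbolic pairing coming from the canonical identification $N^*=\Hom(N,\cdot)$, and this pairing is automatically alternating; so the real work is to produce a non‐degenerate alternating pairing on each self‐dual indecomposable $N$, and then to show uniqueness.

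For existence on a self‐dual indecomposable $N$: the isomorphism $N\isoto N^*$ furnishes a non‐degenerate bilinear form $\<\cdot,\cdot\>$ on $N$ satisfying the compatibility $\<Fx,y\>=\<x,Vy\>^p$. A priori this form is only non‐degenerate, not alternating. The key point is that the space of such non‐degenerate sesquilinear forms on an indecomposable $N$ is, up to scalar, one‐dimensional over the appropriate field (because $\en_{\D_k}(N)$ is local with residue field a finite extension of $k$, by Kraft's classification of the indecomposables), so the transpose form $\<y,x\>$ must be a scalar multiple of $\<x,y\>$; iterating shows the scalar is a root of unity, hence (a twisted version of) $\pm1$. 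If the sign is $-1$ we are done in odd characteristic after a diagonal rescaling; the case where the form is symmetric rather than alternating, and the prime $p=2$, is handled by Kraft's explicit basis for $N$, on which one writes down the alternating form directly — here one uses that an indecomposable self‐dual $BT_1$ module has even $k$‐dimension, so a hyperbolic (split) alternating form exists. This is where I expect the main obstacle: controlling the sign/scalar ambiguity uniformly and dealing with the $p=2$ subtlety, which is exactly the reason polarizations are imposed on $M(G)$ rather than on $G$.

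For uniqueness up to isomorphism: suppose $\<\cdot,\cdot\>_1$ and $\<\cdot,\cdot\>_2$ are two non‐degenerate alternating pairings on $M$ compatible with $F,V$. Their ``ratio'' is an automorphism $\phi$ of the $\D_k$‐module $M$ (defined by $\<x,y\>_2=\<\phi x,y\>_1$), and the alternating condition translates into a self‐adjointness‐type constraint on $\phi$ with respect to $\<\cdot,\cdot\>_1$. One then wants to write $\phi=\psi\cdot\psi^\dagger$ for some $\psi\in\aut_{\D_k}(M)$, where $\dagger$ is the adjoint for $\<\cdot,\cdot\>_1$; such a $\psi$ is precisely an isomorphism carrying $\<\cdot,\cdot\>_1$ to $\<\cdot,\cdot\>_2$. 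This is a surjectivity statement for the ``norm'' map $\psi\mapsto\psi\psi^\dagger$ on the (pro‐)unipotent‐by‐reductive group $\aut_{\D_k}(M)$, which follows from Lang's theorem together with a Schur‐lemma analysis on each isotypic block, exactly as in Oort's argument. In fact, rather than reproving all of this, I would simply cite \cite[\S\S2, 5, 9]{Oort01} and \cite[Cor.~4.2.3]{PriesUlmerFEO} for the full statement, and use the present write‐up only to record the structural reduction above, since that is all that is needed in the sequel.
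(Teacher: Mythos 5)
The paper offers no proof of this proposition beyond citing \cite[\S\S2, 5, 9]{Oort01} and \cite[Cor.~4.2.3]{PriesUlmerFEO}, which is exactly where your proposal ends up, and your structural sketch (hyperbolic pairing on non-self-dual pairs $N\oplus N^*$, a scalar/sign analysis on self-dual indecomposables via the local endomorphism ring, and a norm-surjectivity argument for uniqueness) is a faithful outline of how Oort's argument actually runs, so the two approaches essentially coincide. One small caution: in this paper's conventions duality acts on Kraft words by complementation $f\leftrightarrow v$ alone, i.e.\ $M(w)^*\cong M(w^c)$, not by reversal plus complementation (for cyclic words these genuinely differ, e.g.\ for $w=ffvfvv$), though this does not affect the validity of your reduction, which only needs that duality permutes the indecomposable summands.
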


\section{The Kraft classification of $BT_1$ modules} 
In this section, we review a bijection due to Kraft between
isomorphism classes of $BT_1$ modules over $k$ and certain data
obtained from words on a two-letter alphabet, see \cite{Kraft75}.

\subsection{Words}\label{ss:words}
Let $\WW$ be the monoid of words $w$ on the two-letter alphabet
$\{f,v\}$, and write $1$ for the empty word.  The \emph{complement}
$w^c$ of $w$ is the word obtained by exchanging $f$ and $v$ at every
letter.

For a positive integer $\lambda$, write $\WW_\lambda$ for the words of
length $\lambda$.  If $w \in \WW_\lambda$, we write
$w=u_{\lambda-1}\cdots u_0$ where $u_i\in\{f,v\}$ for
$0 \leq i \leq \lambda-1$.  Define an action of the group $\Z$ on
$\WW$ by requiring that $1\in\Z$ map $w=u_{\lambda-1}\cdots u_0$ to
$u_0u_{\lambda-1}\cdots u_1$.  If $w$ and $w'$ are in the same orbit
of this action, we say $w'$ is a \emph{rotation} of $w$.  The orbit
$\overline{w}$ of $w$ under the action of $\Z$ is called a
\emph{cyclic word}.  Write $\overline\WW$ for the set of cyclic words.

A word is \emph{primitive} if it is not a power of a shorter word,
i.e., not of the form $w^e$ for some integer $e>1$.  Write $\WW'$ for
the set of primitive words.

The three ``decorations'' of $\WW$ (length, cyclic, primitive) may be
applied in any combination.  Thus $\overline\WW'_\lambda$ stands
for the set of primitive cyclic words of length $\lambda$.

\subsection{Cyclic words to $BT_1$ modules}\label{ss:words-to-BT1s}
Following Kraft \cite{Kraft75}, we attach a $BT_1$ module to a
multiset of primitive cyclic words.

Suppose that $w\in\WW_\lambda$ is a word of length $\lambda$, say
$w=u_{\lambda-1}\cdots u_0$ with $u_j\in\{f,v\}$.  Let $M(w)$ be the
$k$-vector space with basis $e_j$ with $j\in\Z/\lambda\Z$ and define a
$p$-linear map $F:M(w)\to M(w)$ and a $p^{-1}$-linear map
$V:M(w)\to M(w)$ by setting
\[F(e_{j})=\begin{cases}
    e_{{j+1}}&\text{if $u_j=f$},\\
    0&\text{if $u_j=v$,}
  \end{cases}
\qquad\text{and}\qquad V(e_{{j+1}})=
  \begin{cases}
    e_{{j}}&\text{if $u_j=v$},\\
    0&\text{if $u_j=f$.}  \end{cases}\]
This construction yields a $BT_1$ module of dimension $\lambda$ over
$k$ which up to isomorphism only depends on the cyclic word
$\overline{w}$ associated to $w$.

Kraft proves that if $w$ is primitive then $M(w)$ is indecomposable,
and that every indecomposable $BT_1$ module is isomorphic to one of
the form $M(w)$ for a unique primitive cyclic word $\overline{w}$.
Thus every $BT_1$ module $M$ is isomorphic to a direct sum
$\oplus M(w_i)$ where $\overline{w}_i$ runs through a uniquely
determined multiset of primitive cyclic words.

Even if $w$ is not primitive, the formulas above define a $BT_1$
module.  If $w=(w')^e$, Kraft also proves that $M(w)\cong M(w')^e$.

It is clear that $M(f)=M(\Z/p\Z)$, $M(v)=M(\mu_p)$, and $M(fv)$ is the
Dieudonn\'e module of the kernel of $p$ on a supersingular elliptic
curve.  More generally, if $w$ has length $>1$ and is primitive, then
$M(w)$ is the Dieudonn\'e module of a unipotent, connected $BT_1$
group scheme.

\subsection{Duality}
It is clear from the definitions that duality of modules corresponds
to complementation of words, i.e., $M(w)^*\cong M(w^c)$.  It follows
that an indecomposable, self-dual $BT_1$ module is either of the form
$M(w)$ where $w$ induces a self-complementary cyclic word
($\overline w^c=\overline w$) or of the form $M(w)\oplus M(w^c)$ where
$\overline w^c\neq\overline w$.

\section{Permutations and $BT_1$ modules}
In this section, we associate a $BT_1$ module to certain permutations
via the Kraft construction.

\subsection{Permutations}
Consider a finite set $S$ written as the disjoint union
$S=S_f\cup S_v$ of two subsets. 
Let $\pi:S\to S$ be a permutation of $S$.  Two such collections of
data $(S=S_f\cup S_v,\pi)$ and $(S'=S'_f\cup S'_v,\pi')$ are
isomorphic if there is a bijection $\iota: S\to S'$
such that $\iota(S_f)=S_f'$, $\iota(S_v)=S_v'$, 
and $\iota \pi = \pi' \iota$.

\subsection{Permutations to words}\label{ss:perms->words}
Given $S=S_f\cup S_v$ and $\pi$ as above, we define a multiset of
cyclic words as follows:   For $a\in S$ with orbit of
size $\lambda$, define the word $w_a=u_{\lambda-1}\cdots u_0$ where
\[u_j=\begin{cases}
  f&\text{if $\pi^j(a)\in S_f$},\\
  v&\text{if $\pi^j(a)\in S_v$}.
\end{cases}\]
Then $\overline{w}_a$ depends only on the orbit of $a$.  This gives a
well-defined map from orbits of $\pi$ to cyclic words, i.e., elements
of $\overline\WW$.  Taking the union over orbits, we can associate to
$(S=S_f\cup S_v,\pi)$ a multiset of cyclic words.  If $S$ and $S'$ are
isomorphic, then they yield the same multiset.

For example, let $S=\{1,\dots,9\}$, $S_f=\{2,3,5,6,9\}$, and
$S_v=\{1,4,7,8\}$.  Let $\pi$ be the permutation $(135)(246)(789)$.
The orbit through 1 and the orbit through 2 both give rise to the
cyclic word $\overline{ffv}$, and the orbit through 7 gives rise to
the cyclic word $\overline{fvv}$.  The associated multiset is
$\{(\overline{ffv})^2,\overline{fvv}\}$ (where $(\overline{ffv})^2$
means the cyclic word $\overline{ffv}$ taken with multiplicity 2).

\subsection{Permutations to $BT_1$ modules}\label{ss:perms->modules}
Given $S=S_f\cup S_v$ and $\pi$ as above, we obtain a multiset of
words, and thus a $BT_1$ module of dimension equal to the cardinality
of $S$.  This $BT_1$ module can be
described directly in terms of $S$ as follows:  Form the $k$-vector
space $M(S)$ with basis elements $\{e_a \mid a\in S\}$ and define
a $p$-linear map $F:M(S)\to M(S)$ and a $p^{-1}$-linear map
$V:M(S)\to M(S)$ by setting
\[F(e_{a})=\begin{cases}
    e_{\pi(a)}&\text{if $a\in S_f$},\\
    0&\text{if $a\in S_v$,}
  \end{cases}
\qquad\text{and}\qquad V(e_{\pi(a)})=
  \begin{cases}
    e_{a}&\text{if $a\in S_v$},\\
    0&\text{if $a\in S_f$.}  \end{cases}\]
Note that $M(S)$ decomposes (as a $BT_1$ module) into submodules indexed
by the orbits of $\pi$.  The submodule corresponding to an orbit of
$\pi$ is indecomposable if and only if the word associated to the
orbit is primitive.

\subsection{Duality}
Given $S=S_f\cup S_v$ and $\pi$ as above, form the $BT_1$ module
$M(S)$.  It is clear from the definitions that the dual module
$M(S)^*$ is the module associated to data
$(S^*=S^*_f\cup S^*_v,\pi^*)$ where $S^*=S$, $S^*_f=S_v$, $S^*_v=S_f$,
and $\pi^*=\pi$.  It follows that $M$(S) is self-dual if and only if
there exists a bijection $\iota:S\isoto S$ which satisfies
$\iota(S_f)=S_v$ and $\pi\circ \iota=\iota\circ \pi$.

\section{Fermat Jacobians}\label{s:FJ}
In this section, 
we study $p$-torsion group schemes of Jacobians of
Fermat curves.

\subsection{$BT_1$ modules associated to curves}
Let $\CC$ be an irreducible, smooth, projective curve of genus $g$
over $k$, and let $J=J_\CC$ be its Jacobian.  In \cite[\S5]{Oda69},
Oda gives $H^1_{dR}(\CC)=H^1_{dR}(J)$ the structure of a $BT_1$
module.
In particular, writing $H$ for $H^1_{dR}(\CC)$, we have
\begin{equation}\label{eq:ImV=KerF}
\im(V:H\to H)=\Ker(F:H\to H)\cong H^0(\CC,\Omega^1_\CC),  
\end{equation}
and
\[\im(F:H\to H)=\Ker(V:H\to H)\cong H^1(\CC,\OO_\CC).\]
Oda proves \cite[Cor.~5.11]{Oda69} that there is a
canonical isomorphism of $\D_k$-modules
\begin{equation}\label{eq:Oda}
H^1_{dR}(\CC)\cong M(J[p]).  
\end{equation}

\subsection{Fermat curves}
For each positive integer $d$ not divisible by $p$, let $F_d$ be the
Fermat curve of degree $d$, i.e., the smooth, projective curve over $k$
with affine model $F_d: X^d+Y^d=1$ and let $J_{F_d}$ be its Jacobian.
Let $\CC_d$ be the smooth, projective curve over $k$ with affine model 
\begin{equation}\label{eq:Cd-model}
\CC_d:\qquad y^d=x(1-x), 
\end{equation}
and let $J_d$ be its Jacobian.

The curve $\CC_d$ is a quotient of $F_d$.  (Substitute $X^d$ for $x$
and $XY$ for $y$ in the equation for $\CC_d$.)  The map
$F_d \to \CC_d$ is the quotient of $F_d$ by a subgroup of
$(\mu_d)^2 \subset {\rm Aut}(F_d)$ of index $d$.
Since the degree of $F_d\to\CC_d$ is prime to $p$, $J_d[p]$ is a
direct factor of $J_{F_d}[p]$.  Almost all of the interesting features
of $J_{F_d}[p]$ are already present in $J_d[p]$, so for simplicity we
use $\CC_d$ whenever possible.

\subsection{Cohomology of $\CC_d$}
The Riemann-Hurwitz formula shows
that the genus of $\CC_d$ is
\[g(\CC_d)=\lfloor (d-1)/2\rfloor=
  \begin{cases}
  (d-1)/2&\text{if $d$ is odd,}\\
    (d-2)/2&\text{if $d$ is even.}
  \end{cases}\]
Moreover, $\CC_d$ admits an action of $\zeta \in \mu_d$
with $\zeta:(x,y)\mapsto(x,\zeta y)$. 

We next describe $H^1_{dR}(\CC_d)$ in a form conducive to
studying it as a $\D_k$-module.  First, write
\[H^1_{dR}(\CC_d)=\bigoplus_{a\in\Z/d\Z}H_a,\]
where $H_a$ is the subspace of $H^1_{dR}(\CC_d)$ where every
$\zeta\in\mu_d$ acts by multiplication by $\zeta^a$.  Since the action
of $\mu_d$ on $\CC_d$ induces the trivial action on $H^2_{dR}(\CC_d)$, 
the cup product induces a perfect duality between $H_a$ and
$H_{-a}$, and a trivial pairing between $H_a$ and $H_b$ if
$b\not\equiv -a\pmod d$.

Let
\[S=
  \begin{cases}
    \Z/d\Z\setminus\{0\}&\text{if $d$ is odd,}\\
    \Z/d\Z\setminus\{0,d/2\}&\text{if $d$ is even.}
  \end{cases}\]
Multiplication by $p$ induces a permutation of $S$.  
We make sense of any archimedean statement about an
element $a\in S$ (e.g., ``$0<a<d/2$'') by
implicitly lifting $a$ to its least positive residue.

\begin{prop}\label{prop:cohom}
\mbox{}
  \begin{enumerate}
  \item If $a\in \Z/d\Z$, then $\dim_k (H_a)=1$ if $a\in S$ and
    $H_a=0$ if $a\not\in S$.
  \item $H^0(\CC_d,\Omega^1_{\CC_d})=\oplus_{0<a<d/2}H_a$.
    \item If $0<a<d/2$, then $FH_a=0$ and $V$ induces an isomorphism
      $V:H_{pa}\to H_a$.
      \item If $d/2<a<d$, then $VH_a=0$ and $F$ induces an isomorphism
        $F:H_a\to H_{pa}$.
  \end{enumerate}
\end{prop}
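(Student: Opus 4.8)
The plan is to compute $H^1_{dR}(\CC_d)$ and the action of $F$ and $V$ on it explicitly using an affine cover of $\CC_d$, exploiting the $\mu_d$-equivariance throughout. I would cover $\CC_d$ by two affine opens: $U_0 = \{x \neq \infty, (1-x) \neq 0\}$ (or rather the complement of the points above $x=1$ and $x=\infty$) and $U_1$ the complement of the points above $x=0$ and $x=\infty$; on the overlap one computes \v{C}ech representatives for de Rham classes as pairs (a $1$-form on each open, differing by an exact form on the overlap). Because $\mu_d$ acts by $\zeta\colon(x,y)\mapsto(x,\zeta y)$ and the differentials $x^i(1-x)^j y^{-a}\,dx/(x(1-x))$ (and similar) are eigenvectors, everything decomposes over $a\in\Z/d\Z$, and I would first pin down which $a$ contribute, proving part (1): a basis element of $H_a$ can be written (for $0<a<d$, $a\ne d/2$) using the form $\omega_a$ proportional to $y^{-a}\,dx$ times an appropriate rational function of $x$ chosen so that the form is regular where needed; counting against $\dim H^1_{dR}(\CC_d)=2g(\CC_d)=2\lfloor(d-1)/2\rfloor=\#S$ forces each $H_a$ with $a\in S$ to be one-dimensional and $H_a=0$ otherwise.

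For part (2), I would identify the holomorphic piece $H^0(\CC_d,\Omega^1_{\CC_d})$ inside $H^1_{dR}$ via \eqref{eq:ImV=KerF}. Using the affine model $y^d=x(1-x)$, the standard computation of regular differentials on a superelliptic curve gives a basis of $H^0(\Omega^1)$ by forms of the shape $x^{i}\,dx/y^{a}$ with the exponents constrained by regularity at $x=0$, $x=1$, and at infinity; the regularity analysis (local uniformizers $y$ at the ramified points $x=0,1$ and the behavior at the points over $x=\infty$) shows precisely that the $\mu_d$-weight of a regular differential lies in $\{a : 0<a<d/2\}$, and that for each such $a$ exactly one form appears. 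Hence $H^0(\CC_d,\Omega^1_{\CC_d})=\bigoplus_{0<a<d/2}H_a$, which is part (2). Combined with Oda's identification \eqref{eq:ImV=KerF} that $H^0(\Omega^1)=\Ker F=\im V$, this immediately gives $FH_a=0$ for $0<a<d/2$, and dually (using the perfect cup-product pairing between $H_a$ and $H_{-a}$, under which $F$ and $V$ are adjoint up to the $p$-power twist, and the fact that $\Ker V = \im F = H^1(\OO)$ is the orthogonal complement of $H^0(\Omega^1)$) that $VH_a=0$ for $d/2<a<d$.

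It remains to prove that $V\colon H_{pa}\to H_a$ is an \emph{isomorphism} when $0<a<d/2$ (and dually that $F\colon H_a\to H_{pa}$ is an isomorphism when $d/2<a<d$; the two are equivalent under duality, so I would only do one). Since both source and target are one-dimensional by part (1) — note $pa \in S$ exactly when $a\in S$, as $p$ permutes $S$ — it suffices to show $V$ is nonzero on $H_{pa}$, equivalently that $H_{pa}\not\subseteq\Ker V$. I would argue this by elimination: $\Ker V = H^1(\OO_{\CC_d}) = \bigoplus_{d/2<b<d}H_b$ by part (2) and duality, so $H_{pa}\subseteq\Ker V$ would force the residue of $pa$ mod $d$ to lie in $(d/2,d)$; but I want to conclude $V|_{H_{pa}}$ is an isomorphism onto $H_a$ with $0<a<d/2$. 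The clean way is: $\im V = H^0(\CC_d,\Omega^1_{\CC_d})=\bigoplus_{0<a<d/2}H_a$, and $V$ is $\mu_d$-equivariant, so $V$ maps $H_{pa}$ into $H_a$ (since $V$ is $p^{-1}$-linear, it sends the $\mu_d$-weight-$pa$ space to the weight-$a$ space — here one checks the weight bookkeeping carefully, as $V$ being $\sigma^{-1}$-semilinear over $k$ does not twist the $\mu_d$-action since $\mu_d\subset\F_p^\times$ acts $\F_p$-linearly... actually since $d\mid p^?-1$ is not assumed, I must instead track that $\zeta\in\mu_d(k)$ and $V(\zeta\cdot e)=\zeta^{1/p}V(e)$, so weight $b$ goes to weight $b p^{-1}$, i.e. $V(H_b)\subseteq H_{bp^{-1}}$, equivalently $V(H_{pa})\subseteq H_a$). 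Then, for dimension reasons, $V\colon\bigoplus_{a\in S}H_a\to\bigoplus_{0<a<d/2}H_a$ is surjective with kernel $\bigoplus_{d/2<a<d}H_a$ of the complementary dimension, so it restricts to an isomorphism on $\bigoplus_{0<a<d/2}H_{pa}=\bigoplus_{0<b<d/2}H_{\,?}$ — and since $V$ respects the weight grading as above, the single map $V\colon H_{pa}\to H_a$ must itself be an isomorphism for each $0<a<d/2$. The dual statement for $F$ follows by applying duality. The main obstacle is the careful semilinear-algebra bookkeeping of how $F$ (which is $\sigma$-semilinear) and $V$ (which is $\sigma^{-1}$-semilinear) interact with the $\mu_d$-grading, together with making the regularity computations at the ramified and infinite points of $\CC_d$ precise enough to nail down exactly the weight ranges $0<a<d/2$ and $d/2<a<d$; once the grading is set up correctly, parts (1)–(4) fall out by dimension counting and Oda's theorem.
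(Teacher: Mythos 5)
Your overall strategy --- exhibit $\mu_d$-eigenforms to pin down the weights, count against $\dim H^1_{dR}=2g=\#S$, and then combine $\Ker F=\im V=H^0(\CC_d,\Omega^1_{\CC_d})$ with the weight bookkeeping $F(H_a)\subseteq H_{pa}$, $V(H_{pa})\subseteq H_a$ and a dimension count --- is essentially the paper's proof (the paper obtains the range $d/2<a<d$ in part (1) by cup-product duality with $H_{-a}$ rather than by a second regularity computation, but that is cosmetic). Your ``clean way'' for the isomorphism statements is correct: since $\im V=\bigoplus_{0<a<d/2}H_a$ and the only weight space that $V$ can carry into $H_a$ is $H_{pa}$, surjectivity forces $V:H_{pa}\to H_a$ to be onto, hence bijective, for $0<a<d/2$, and dually for $F$.

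There is, however, a genuine error in your treatment of $\Ker V$. You assert that $\Ker V=\im F$ ``is the orthogonal complement of $H^0(\Omega^1)$'' and hence equals $\bigoplus_{d/2<b<d}H_b$, and you use this to conclude $VH_a=0$ for $d/2<a<d$. Both steps fail. The adjunction $\<Fx,y\>=\<x,Vy\>^p$ shows that $\im V=\Ker F=H^0(\Omega^1)$ is its \emph{own} orthogonal complement (it is Lagrangian), and likewise $\Ker V=\im F$ is its own orthogonal complement; neither is $\bigoplus_{d/2<b<d}H_b$ in general. What the grading actually gives is: $V(H_{pa})\subseteq H_a$ and $H_a\cap\im V=0$ for $d/2<a<d$, hence $V(H_{pa})=0$ for $d/2<a<d$; that is, $\Ker V=\bigoplus_{a\in S_f}H_{pa}$ is supported on $pS_f$, which need not equal $S_f$. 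Concretely, take $d=4$, $p=3$: then $\CC_4$ is a supersingular elliptic curve, $\Ker V=\im V=H^0(\Omega^1)=H_1$, and $V:H_3\to H_1$ is an isomorphism, so $VH_3\neq0$ even though $2<3<4$. (The same example shows that the vanishing assertion in part (4) must be read with $H_{pa}$ in place of $H_a$; this is the form the paper's own proof establishes and the form that feeds into Theorem~\ref{thm:BT1fermat}, and it follows in one line from $V(H_{pa})\subseteq H_a\cap\im V=0$.) You should therefore replace your orthogonal-complement argument by this direct deduction and not attempt to identify $\Ker V$ with the ``upper half'' of the weight decomposition.
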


There are several similar calculations in the literature (e.g.,
\cite{Weil76}, \cite[\S5]{Dummigan95}, and \cite[\S6]{Ulmer14d}).  For
the convenience of the reader, we include the following efficient and
transparent proof.

\begin{proof}
  For $0<a<d/2$, a simple calculation shows that the 1-form
  $y^adx/y^d$ on the affine model \eqref{eq:Cd-model} extends to a
  global 1-form on $\CC_d$, and its class in $H^1_{dR}(\CC_d)$
  lies in $H_a$.  This shows that $\dim_k (H_a)\ge1$ for $0<a<d/2$. 
  Because of the perfect duality between $H_a$ and $H_{-a}$, 
  we see that $\dim_k(H_a)\ge1$ for $d/2<a<d$.  Since $g(\CC_d) = 
  \lfloor(d-1)/2\rfloor$, it follows that $\dim_k(H_a)=1$ for
  $a\in S$ and $H_a=0$ for $a\not\in S$.  This proves parts (1) and (2).

  By definition, $FH_a\subset H_{pa}$ and
  $VH_{pa}\subset H_a$. 
  By \eqref{eq:ImV=KerF}, $F$ kills
 $H^0(\CC_d,\Omega^1_{\CC_d})=\oplus_{0<a<d/2}H_a$.
  Since $\dim(\Ker F)=g$,
  the map $F:H_a\to H_{pa}$ is injective, and thus bijective, for $d/2<a<d$.
  Similarly, by \eqref{eq:ImV=KerF},
  $\im V=H^0(\CC_d,\Omega_{\CC_d}^1)$.  So $V:H_{pa}\to H_a$ is
  surjective, and thus bijective, for $0<a<d/2$ and zero for
  $d/2<a<d$.  This proves parts (3) and (4).
\end{proof}

Now let $S_f,S_v\subset S$ be given by
\[S_f=\{a \mid d/2<a<d\}\quad\text{and}\quad S_v=\{a \mid 0<a<d/2\},\]
and let $\pi:S\to S$ be the permutation induced by multiplication by
$p$.

\begin{thm} \label{thm:BT1fermat}
  The Dieudonn\'e module $M(J_d[p])$ is the $BT_1$ module associated to
  the data
  \begin{equation*}
  S=
       \begin{cases}
\Z/d\Z\setminus\{0,d/2\}&\text{if $d$ is even}\\         
\Z/d\Z\setminus\{0\}&\text{if $d$ is odd},
\end{cases}
\end{equation*}
\begin{equation*}
  S_f =\left\{a\in S \mid d/2<a<d \right\}, \quad
  S_v=\left\{a\in S \mid 0<a<d/2 \right\},
\end{equation*}
and the permutation $\pi:S\to S$ given by $\pi(i)=pa$.
\end{thm}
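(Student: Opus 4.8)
The plan is to reduce the theorem to Proposition~\ref{prop:cohom} via Oda's isomorphism~\eqref{eq:Oda}, and then to manufacture the correct eigenbasis of de Rham cohomology by a semilinear rescaling. By~\eqref{eq:Oda}, $M(J_d[p])\cong H:=H^1_{dR}(\CC_d)$ as $\D_k$-modules, so it suffices to build a $\D_k$-module isomorphism between $H$ and the $BT_1$ module $M(S)$ associated in~\S\ref{ss:perms->modules} to the data $(S=S_f\cup S_v,\pi)$ of the statement. First I would use Proposition~\ref{prop:cohom}(1) to write $H=\bigoplus_{a\in S}H_a$ with $\dim_k H_a=1$, and observe that $F$ carries $H_a$ into $H_{pa}=H_{\pi(a)}$ and $V$ carries $H_a$ into $H_{p^{-1}a}=H_{\pi^{-1}(a)}$; hence $H$ is the direct sum, over the orbits $O$ of $\pi$, of the $\D_k$-submodules $H_O:=\bigoplus_{a\in O}H_a$. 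Since $M(S)$ decomposes over the orbits of $\pi$ in exactly the same way (\S\ref{ss:perms->modules}), it is enough to fix one orbit $O$, with attached cyclic word $w_O$ as in~\S\ref{ss:perms->words}, and prove $H_O\cong M(w_O)$.

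Fix such an orbit and write $O=\{a_0,\dots,a_{\lambda-1}\}$ with $a_{i+1}=pa_i$, subscripts mod $\lambda=|O|$. I would read off the pattern of maps among the lines $H_{a_i}$ from Proposition~\ref{prop:cohom}(3),(4) together with the $BT_1$ identity $\Ker V=\im F$: if $a_i\in S_f$ then $F\colon H_{a_i}\to H_{a_{i+1}}$ is an isomorphism and $V$ kills $H_{a_{i+1}}$ (the latter because $\Ker V=\im F=\bigoplus_{a\in S_f}H_{pa}$, using that $F$ kills $\bigoplus_{a\in S_v}H_a$), whereas if $a_i\in S_v$ then $V\colon H_{a_{i+1}}\to H_{a_i}$ is an isomorphism and $F$ kills $H_{a_i}$. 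In both cases this gives a distinguished $\sigma$-semilinear isomorphism $T_i\colon H_{a_i}\to H_{a_{i+1}}$: take $T_i=F$ when $a_i\in S_f$, and $T_i$ the inverse of the isomorphism $V\colon H_{a_{i+1}}\to H_{a_i}$ when $a_i\in S_v$ (recall $V$ is $\sigma^{-1}$-semilinear, so its inverse is $\sigma$-semilinear). Comparing with the formulas of~\S\ref{ss:words-to-BT1s} defining $M(w_O)$, I would then note that a $k$-linear map $M(w_O)\to H_O$ sending the $i$-th standard basis vector $e_i$ to a nonzero $h_i\in H_{a_i}$ commutes with $F$ and $V$ if and only if $h_{i+1}=T_i(h_i)$ for all $i$ --- the remaining relations $Fh_i=0$ for $a_i\in S_v$ and $Vh_{i+1}=0$ for $a_i\in S_f$ hold for free --- and that it is then automatically an isomorphism, being a bijection on bases.

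The crux, and the step I expect to be the main obstacle, is to choose the $h_i$ coherently around the cycle. The composite $T:=T_{\lambda-1}\circ\cdots\circ T_0$ is a $\sigma^{\lambda}$-semilinear automorphism of the one-dimensional $k$-space $H_{a_0}$; in terms of a basis $v$ with $T(v)=cv$ ($c\in k^\times$), the vector $h_0:=\mu v$ is fixed by $T$ exactly when $\mu^{p^{\lambda}-1}=c^{-1}$, and such a $\mu\in k^\times$ exists because $k$ is algebraically closed. This is the only place algebraic closedness is used; it is the Hilbert~90 / Lang phenomenon for semilinear operators on finite-dimensional spaces. Setting $h_i:=(T_{i-1}\circ\cdots\circ T_0)(h_0)$ for $1\le i\le\lambda-1$ produces nonzero vectors with $h_{i+1}=T_i(h_i)$ for every $i$ --- the closing relation $h_0=T_{\lambda-1}(h_{\lambda-1})$ being precisely $T(h_0)=h_0$ --- so $e_i\mapsto h_i$ extends to a $\D_k$-module isomorphism $M(w_O)\to H_O$. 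Summing over the orbits of $\pi$ gives $M(S)\cong H$, and combining with~\eqref{eq:Oda} yields $M(J_d[p])\cong M(S)$, which is the assertion. Apart from the semilinear fixed-point step, the only delicate point is the bookkeeping: checking that Proposition~\ref{prop:cohom} really does deliver, for both $F$ and $V$, exactly the pattern of zero maps and isomorphisms that the word $w_O$ prescribes.
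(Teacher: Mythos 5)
Your proof is correct and follows the same route as the paper, which simply declares the theorem ``immediate from Proposition~\ref{prop:cohom} and Section~\ref{ss:perms->modules}''; your contribution is to supply the one genuinely non-trivial detail that this phrasing suppresses, namely the semilinear rescaling (solving $\mu^{p^{\lambda}-1}=c^{-1}$ in $k^{\times}$) needed to close each cycle and normalize the nonzero structure constants of $F$ and $V$ to $1$, which is exactly where algebraic closedness of $k$ enters. You were also right to derive ``$V$ kills $H_{pa}$ for $a\in S_f$'' from $\Ker V=\im F$ rather than quoting Proposition~\ref{prop:cohom}(4) verbatim, since that part is literally stated as $VH_a=0$ while its proof (and the consistency with the permutation-module conventions) actually gives $VH_{pa}=0$.
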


\begin{proof}
  This is immediate from Proposition~\ref{prop:cohom} and
  Section~\ref{ss:perms->modules}. 
\end{proof}

\begin{rem}
  The proof of the theorem shows that $J_d[p]$ decomposes as a direct
  sum over the orbits of $\pi$ on $S$.  By
  Sections~\ref{ss:words-to-BT1s} and \ref{ss:perms->modules}, the
  summand corresponding to an orbit is indecomposable if and only if
  the word associated to the orbit is primitive.
\end{rem}

\begin{rem}
The data
  $(S=S_f\cup S_v,\pi)$ also completely determines $J_d[p]$ up to
  isomorphism as a polarized $BT_1$ group scheme.  Indeed, by
  Proposition~\ref{prop:self-dual=>polarized}, given any
  non-degenerate, alternating form on the $BT_1$ module defined by
  $(S=S_f\cup S_v,\pi)$, we may choose the isomorphism in the theorem
  so that it intertwines the given form with the polarization on
  $H^1_{dR}(\CC_d)$ induced by the cup product. 
 \end{rem}

\subsection{The $p$-torsion of Fermat curves}\label{ss:Fd}

In this section, we determine the $BT_1$ modules of the Jacobians of
Fermat curves.  We need this material to complete
Theorem~\ref{thm:main} when $p<5$.

First, note that $\mu_d^2$ acts on $F_d$ via
$(\zeta_1,\zeta_2):(X,Y)\mapsto(\zeta_1X,\zeta_2Y)$.  The cohomology
$H=H^1_{dR}(F_d)$ decomposes into subspaces $H_{a,b}$ indexed by
$(a,b)\in(\Z/d\Z)^2$ on which $(\zeta_1,\zeta_2)\in\mu_d^2$ acts by
$\zeta_1^a\zeta_2^b$.  An argument parallel to
Proposition~\ref{prop:cohom} shows that $H_{a,b}$ is 1-dimensional if
$(a,b)\in T$ and $0$ otherwise, where
\begin{equation*}
T =\left\{(a,b)\in(\Z/d\Z)^2 \mid a\neq0,b\neq0,a+b\neq0 \right\}.
\end{equation*}
Moreover, setting
\begin{equation*}
 T_f =\left\{(a,b)\in S \mid a+b>d\right\},
\text{ and }T_v=\left\{(a,b)\in S \mid a+b<d\right\},
\end{equation*}
then $F$ induces an isomorphism $F:H_{a,b}\to H_{pa,pb}$ if
$(a,b)\in T_f$, and $V$ induces an isomorphism
$V:H_{pa,pb}\to H_{a,b}$ if $(a,b)\in T_v$.  Consider the permutation
$\sigma:T\to T$ given by $\sigma(a,b)=(pa,pb)$.  We may associate
words to elements of $T$ and cyclic words to orbits of $\sigma$.  As
in Section~\ref{ss:perms->modules}, this defines a $BT_1$ module.
Applying Oda's Theorem \eqref{eq:Oda} proves that:
\begin{thm}\label{thm:BT1Fd}
The module $M(J_{F_d}[p])$ is the $BT_1$
module associated to the data $(T=T_f\cup T_v,\sigma)$.  
\end{thm}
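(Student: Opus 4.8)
The plan is to mirror the proof of Theorem~\ref{thm:BT1fermat} exactly, since the statement for $F_d$ is the natural two-variable analogue of the statement for $\CC_d$. The strategy has two halves: first establish the analogue of Proposition~\ref{prop:cohom} for $F_d$ (this is the content summarized informally just before the theorem statement, referred to as ``an argument parallel to Proposition~\ref{prop:cohom}''), and then invoke Oda's theorem \eqref{eq:Oda} together with the dictionary of Section~\ref{ss:perms->modules} to conclude that $M(J_{F_d}[p])$ is the $BT_1$ module attached to $(T = T_f\cup T_v,\sigma)$. The second half is formal once the first is in place, so the real work is the cohomological computation.

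For the cohomological part, I would proceed as follows. The group $\mu_d^2$ acts on $F_d$, so $H = H^1_{dR}(F_d)$ decomposes as $\bigoplus_{(a,b)} H_{a,b}$ over characters of $\mu_d^2$. To show $\dim_k H_{a,b} = 1$ for $(a,b)\in T$ and $0$ otherwise, I would exhibit an explicit global $1$-form in each eigenspace indexed by $T_v$ (the analogue of $y^a\,dx/y^d$ on $\CC_d$; for $F_d$ one uses forms like $X^{a-1}Y^{b-d}\,dX$ or a symmetric variant, checking via local coordinates at the points over $X=\infty$, $Y=\infty$ that they extend). This gives $\dim_k H_{a,b}\ge 1$ for the ``$V$-part'' eigenspaces; the perfect cup-product duality $H_{a,b}\times H_{-a,-b}\to k$ (valid because $\mu_d^2$ acts trivially on $H^2_{dR}$) then forces $\dim_k H_{a,b}\ge 1$ on the ``$F$-part'' as well; and a count against $\dim_k H = 2g(F_d) = (d-1)(d-2)$, which matches $|T|$, pins everything down and simultaneously identifies $H^0(F_d,\Omega^1_{F_d})$ with $\bigoplus_{T_v} H_{a,b}$. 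Then, exactly as in Proposition~\ref{prop:cohom}(3)--(4), the inclusions $F H_{a,b}\subset H_{pa,pb}$, $V H_{pa,pb}\subset H_{a,b}$ together with $\Ker F = \im V = H^0(\Omega^1)$ and $\im F = \Ker V$ (from \eqref{eq:ImV=KerF}, applied to $F_d$) force $F\colon H_{a,b}\to H_{pa,pb}$ to be an isomorphism on $T_f$ and zero on $T_v$, and dually for $V$.

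Having established this, the module $M(S)$-description of Section~\ref{ss:perms->modules} applies verbatim with $S$ replaced by $T$, $S_f$ by $T_f$, $S_v$ by $T_v$, and $\pi$ by $\sigma(a,b) = (pa,pb)$: the basis $\{e_{a,b}\}$ is a choice of generator of each line $H_{a,b}$, and the formulas for $F$ and $V$ in terms of $\sigma$ match what the cohomology computation just produced. Oda's isomorphism \eqref{eq:Oda} then identifies $H^1_{dR}(F_d)$ with $M(J_{F_d}[p])$ as $\D_k$-modules, which is the assertion.

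The main obstacle is the explicit form computation: writing down representatives of the $T_v$-eigenspaces in $H^0(F_d,\Omega^1_{F_d})$ and verifying they are holomorphic at the points of $F_d$ lying over the coordinate axes and the line at infinity requires a careful look at local parameters on the smooth projective model, since $F_d$ (unlike $\CC_d$) is not hyperelliptic-like and the boundary structure is a bit more involved. Everything downstream of that — the duality argument, the dimension count, the $F$/$V$ bijectivity, and the translation into the permutation language — is routine and parallels the $\CC_d$ case line by line. One should also note in passing that, since $\CC_d$ is the quotient of $F_d$ by an index-$d$ subgroup of $\mu_d^2$ and the degree is prime to $p$, the decomposition here refines the one in Theorem~\ref{thm:BT1fermat}: restricting the $\mu_d^2$-action to the relevant subgroup and taking invariants recovers $H^1_{dR}(\CC_d) = \bigoplus_a H_a$ from $\bigoplus_{(a,b)} H_{a,b}$, which provides a useful consistency check.
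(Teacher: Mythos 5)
Your proposal is correct and follows essentially the same route as the paper: the paper's proof consists precisely of the eigenspace decomposition of $H^1_{dR}(F_d)$ under $\mu_d^2$, an ``argument parallel to Proposition~\ref{prop:cohom}'' (explicit holomorphic forms, cup-product duality, and the dimension count $2g(F_d)=(d-1)(d-2)=|T|$) to pin down the action of $F$ and $V$ on the lines $H_{a,b}$, and then Oda's isomorphism \eqref{eq:Oda} combined with the dictionary of Section~\ref{ss:perms->modules}. You have simply filled in the details that the paper leaves to the reader, and they check out.
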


\subsection{A Shimura variety perspective}
Another proof of Theorem~\ref{thm:BT1fermat} can be extracted from
\cite{Moonen04} as follows.  By \cite{Weil76}, $J_d$ is an abelian
variety of CM type and Proposition~\ref{prop:cohom}(1-2) reveals the
CM type of $J_d$.  This yields a zero-dimensional Shimura subvariety
of $\AA_g$; each of its points lies in the open stratum of the
stratification by either Newton polygon or Ekedahl--Oort type.  By
\cite[\S1]{Moonen04}, one can compute the isomorphism type of $J_d[p]$
in terms of the CM type.

\subsection{Other related work}
The curve $\CC_d$ is hyperelliptic.  When $p=2$,
Theorem~\ref{thm:BT1fermat} is a special case of \cite{ElkinPries13},
where the authors compute the $BT_1$ module and E--O type for all
hyperelliptic curves in characteristic 2.  When $p$ is odd,
Devalapurkar and Halliday compute the action of $F$ and $V$ on the mod
$p$ Dieudonn\'e module for every hyperelliptic curve
\cite{DevalapurkarHallidaypp17}.  However, it appears to be difficult
to deduce Theorem~\ref{thm:BT1fermat} from their result because it
describes the actions of $F$ and $V$ by unwieldy formulas.  Our work
gives them essentially as permutation matrices.

In \cite{Moonenpp20}, the author gives a method for computing the
$BT_1$ module of a smooth complete intersection curve over a field
whose characteristic is greater than the largest of the multidegrees.
This method can be used to recover a version of
Theorem~\ref{thm:BT1fermat} for the Fermat curve $F_d$ in
characteristic $p$ when $d<p$.  However, this is not adequate to prove
Theorem~\ref{thm:main} because most $BT_1$ modules do not appear in
Jacobians of Fermat curves of degrees $d<p$.

\section{Proof of Theorem~\ref{thm:main}}\label{s:proof}
We will prove most cases of Theorem~\ref{thm:main} by considering the
Fermat quotient curve $\CC_d$ for $d$ of the form $p^\ell-1$.  When
$p< 5$, we also need the Fermat curve $F_d$ and an auxiliary fiber
product.

\subsection{$p$-adic digits}
Fix a positive integer $\ell$ and let $d=p^\ell-1$.   As before, let
  \begin{equation*}
  S=
       \begin{cases}
\Z/d\Z\setminus\{0,d/2\}&\text{if $d$ is even}\\         
\Z/d\Z\setminus\{0\}&\text{if $d$ is odd},
\end{cases}
\end{equation*}
\begin{equation*}
  S_f =\left\{a\in S \mid d/2<a<d \right\}, \quad
  S_v=\left\{a\in S \mid 0<a<d/2 \right\},
\end{equation*}
and let the permutation $\pi:S\to S$ be given by $\pi(a)=pa$.  Given
$a\in S$, consider the $p$-adic expansion of its least positive residue:
\[a=a_0+a_1p+\cdots+a_{\ell-1}p^{\ell-1},\]
where $a_i\in\{0,\dots,p-1\}$.  We exclude: $a=0$ (all $a_i=0$);
$a=d$ (all $a_i=p-1$); and, if $p$ is odd, $a=d/2$ (all $a_i=(p-1)/2$).
Note that
\[pa\equiv a_{\ell-1}+a_0p+\cdots+a_{\ell-2}p^{\ell-1}\pmod d,\]
so $\pi$ corresponds to permuting the digits of $a$ cyclically.

By definition, $a\in S_v$ if and only if $0<a<d/2$.  In terms of
digits, this holds if and only if 
\begin{align*}
  a_{\ell-1}&<(p-1)/2, \text{ or }\\
    a_{\ell-1}&=(p-1)/2\quad\text{and}\quad a_{\ell-2}<(p-1)/2, \text{ or }\\
    a_{\ell-1}&=a_{\ell-2}=(p-1)/2\quad\text{and}\quad
                a_{\ell-3}<(p-1)/2,  \text{ or } \ldots
\end{align*}
In other words, the condition is that the first $p$-adic digit to the left of
$a_{\ell-1}$ (inclusive) which is not $(p-1)/2$ is in fact
less than $(p-1)/2$.  

Similarly $a\in S_f$ if and only if $a>d/2$.  This holds if and only
if the first $p$-adic digit to the left of $a_{\ell-1}$ (inclusive)
which is not $(p-1)/2$ is in fact greater than $(p-1)/2$.

For $a\in S$, let $\lambda$ be the size of the orbit of $\pi$
through $a$.  We say that $a$ is \emph{primitive} if $\lambda=\ell$.
It is clear that $a$ is primitive if $gcd(d,a)=1$.  More generally,
$a$ fails to be primitive if and only if $d/\gcd(d,a)$ divides
$p^\lambda-1$ for some $\lambda<\ell$.  Also, $a$ is primitive if
$w_a$ is primitive (in the sense of Section~\ref{ss:words}), but not
conversely in general.

Let $w_a=u_{\lambda-1}\cdots u_0$ be the word attached to $a$ as in
Section~\ref{ss:perms->words}.  The discussion above shows that
$u_j=v$ if and only if the first $p$-adic digit of $a$ to the left of
$a_{\ell-1-j}$ (inclusive) which is not $(p-1)/2$ is in fact less than
$(p-1)/2$.  (Finding the first such digit may require ``wrapping
around,'' i.e., passing from $a_0$ to $a_{\ell-1}$.)

Using these observations, we may write down elements $a\in S$ with
given words:

\begin{prop}\label{prop:mult>0}
  \mbox{}
  Suppose $w$ is a word of length $\ell > 1$. 
  \begin{enumerate}
  \item If $w$ is primitive, then there is
    an element $a\in S$ such that $w_a=w$.
  \item If $p>3$ and $w$ is any word \textup{(}not necessarily
    primitive\textup{)}, then there is an element $a\in S$ such that
    $w_a=w$.
  \end{enumerate}
\end{prop}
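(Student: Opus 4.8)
The plan is to construct $a \in S$ digit by digit, exploiting the explicit dictionary established just above the proposition: the letter $u_j$ of $w_a$ records whether the first $p$-adic digit of $a$ at or to the left of position $\ell-1-j$ (cyclically) that is not $(p-1)/2$ is below or above $(p-1)/2$. So I want to choose digits $a_0,\dots,a_{\ell-1}\in\{0,\dots,p-1\}$ so that this reading-off procedure reproduces the prescribed word, while simultaneously avoiding the three forbidden values $a=0$, $a=d$, $a=d/2$ (all digits $0$, all $p-1$, or all $(p-1)/2$ respectively).

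For part (2), where $p>3$, there is genuine slack: I can insist that \emph{every} digit is not equal to $(p-1)/2$, so the ``wrap-around'' never happens and the correspondence degenerates to the simple local rule ``$u_j = v \iff a_{\ell-1-j} < (p-1)/2$ and $u_j = f \iff a_{\ell-1-j} > (p-1)/2$.'' Concretely, set $a_{\ell-1-j} = 0$ if $u_j = v$ and $a_{\ell-1-j} = p-1$ if $u_j = f$. This reproduces $w$ exactly. Since $p>3$ means $0 \ne (p-1)/2 \ne p-1$, none of the chosen digits equals $(p-1)/2$; and since $w$ is nonconstant would be the only worry, I should note that even a constant word $w = ff\cdots f$ or $w = vv\cdots v$ gives $a$ with all digits $p-1$ (i.e. $a = d$) or all digits $0$ (i.e. $a = 0$), which \emph{are} excluded. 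So I need a small perturbation: for the constant word $v^\ell$, take $a_j = 0$ for all $j$ except one position where I put $a_i = 1$ (still $< (p-1)/2$ since $p > 3$ forces $(p-1)/2 \ge 2$); this keeps all letters $v$, stays away from all forbidden values, and $a$ is a genuine element of $S$. Symmetrically for $f^\ell$ using a digit $p-2$. A subtlety to check: I must also make sure $a$ actually lies in $S$ as a residue mod $d$, i.e. that the least positive residue of the integer I wrote down has the claimed $p$-adic digits — this is automatic as long as $0 \le a \le d$ with not all digits equal, i.e. $0 < a < d$, which the perturbation guarantees.

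For part (1), with $w$ primitive of length $\ell$ and no hypothesis on $p$ (so possibly $p = 2$ or $3$), the clean trick above may fail because when $p = 2$ we have $(p-1)/2 = 1/2 \notin \Z$ — so \emph{no} digit can ever equal $(p-1)/2$, the wrap-around is vacuous, and again the local rule $u_j = v \iff a_{\ell-1-j} = 0$, $u_j = f \iff a_{\ell-1-j} = 1$ applies; setting digits accordingly gives $a$ with $w_a = w$, and since $w$ is primitive it is in particular nonconstant (a constant word of length $\ell>1$ is $(\text{letter})^\ell$, not primitive), so $a \ne 0, d$, and the $d/2$ obstruction does not exist for $p=2$ unless $d$ is odd in which case it is irrelevant — I need to double-check $d = 2^\ell - 1$ is odd so $d/2 \notin \Z/d\Z$ in the relevant sense, which it is. For $p = 3$, we have $(p-1)/2 = 1$, a legitimate digit value, so the wrap-around is live. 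Here I would again try to pick all digits in $\{0, 2\}$ (avoiding the value $1$): set $a_{\ell-1-j} = 0$ for $u_j = v$ and $a_{\ell-1-j} = 2$ for $u_j = f$; primitivity of $w$ gives nonconstancy hence $a \ne 0, d$, and no digit is $1$ so $a \ne d/2$. This handles $p = 3$ as well.

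The main obstacle, and the part needing care, is the bookkeeping around the forbidden values and the wrap-around clause when one is forced to use the digit $(p-1)/2$ — but the strategy above sidesteps this entirely by always choosing digits in $\{0, p-1\}$ (plus a single harmless perturbation in the constant-word case of part (2)), so that the digit $(p-1)/2$ is never used and the letter-reading rule becomes purely local. Thus the real content is just: (a) verify the local rule reproduces $w$; (b) verify the resulting $a$ avoids $\{0, d, d/2\}$, which follows from nonconstancy of $w$ (guaranteed by primitivity in (1), and by the perturbation in (2)); and (c) confirm that the integer written in digits is in fact the least positive residue of the element of $S$ we claim, which holds because it lies strictly between $0$ and $d$.
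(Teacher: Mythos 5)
Your part (1) is essentially the paper's argument and is fine (you should note in passing that primitivity of $w$ makes the digit string $a_0,\dots,a_{\ell-1}$ aperiodic, so the orbit of $a$ under $\pi$ really has size $\ell$ and $w_a$ has length $\ell$). Part (2), however, has a genuine gap, and it is not about the constant words. By definition $w_a$ is a word whose length equals the size of the $\pi$-orbit of $a$, where $\pi$ cyclically permutes the $p$-adic digits. If $w=(w')^e$ with $e>1$ and $w'$ of length $\lambda=\ell/e$, your assignment $v\mapsto 0$, $f\mapsto p-1$ produces an $a$ whose digit string is periodic with period $\lambda$, so $\pi^{\lambda}(a)=a$, the orbit has size at most $\lambda<\ell$, and $w_a=w'$ rather than $w$. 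Concretely, for $p=5$ and $w=fvfv$ your recipe gives $a=104$ in $\Z/624\Z$, whose orbit under multiplication by $5$ is $\{104,520\}$ of size $2$, so $w_a=fv$. Your sentence ``this reproduces $w$ exactly'' silently assumes the orbit is full, which is precisely what fails for imprimitive words.

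The missing idea is a perturbation for \emph{every} imprimitive word, not just $f^\ell$ and $v^\ell$: pick a position with $a_j=0$ (possible since $w\neq v^\ell$) and change that digit to $1$. Because $p>3$ gives $1<(p-1)/2$, every letter of the word is unchanged, while now exactly one digit equals $1$, so the digit string is aperiodic, the orbit has size $\ell$, and $w_a=w$. This is also where the hypothesis $p>3$ genuinely enters. As written, your argument for non-constant words uses $p>3$ only to keep $0$ and $p-1$ distinct from $(p-1)/2$, and so would apply verbatim to $p=3$; but part (2) is false for $p=3$ (the paper's remark after the proposition notes that $(fv)^e$ is not the word of any $a\in S$ when $p=3$), which is an independent signal that an essential step is missing.
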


\begin{proof}
  (1) Let $w=u_{\ell-1}\cdots u_0$ be a primitive word of length
  $\ell>1$.  For $0\le j<\ell$, set
  \[a_j=\begin{cases}
      0&\text{if $u_{\ell-1-j}=v$},\\
      p-1&\text{if $u_{\ell-1-j}=f$}.
    \end{cases}\]
  Since $w$ is primitive, and in particular not equal to $f^\ell$ nor
  to $v^\ell$, the integer $a=a_0+\cdots+a_{\ell-1}p^{\ell-1}$ defines
  an element of $S$, and it is clear that $w_a=w$.

  (2) If $w=v^\ell$ (resp. $w=f^\ell$), we may take $a=1$
  (resp. $a=d-1$).  (Here we use $p>2$.)  For any other word, the
  recipe in the preceding paragraph yields an element of $S$.
  However, if $w$ is not primitive, say $w={w'}^e$, this element is not
  what we need because its word is $w'$.  Modify $a$ as follows:
  choose $j$ so that $a_j=0$ (which exists because $w\neq v^\ell$),
  and change $a_j$ to 1.  Then the new $a$ is primitive (because
  exactly one of its digits is 1) and satisfies $w_a=w$.  (Here we use
  that $1<(p-1)/2$, i.e., that $p>3$.)  This
  completes the proof of the proposition.
  \end{proof}

  \begin{rem}
    In \cite{PriesUlmerFEO}, we give a more refined analysis and
    compute the number of $a\in S$ with $w_a=w$ for any $w$, $p$, and
    $\ell$.  It turns out that the restriction on $p$ in part (2) is
    essential.  If $p=3$ and $e>1$, then the word $(fv)^e$ is not the
    word associated to an element of $S$, and if $p=2$, $e>1$, and
    $w'$ is non-trivial, then ${w'}^e$ is also not associated to an
    element of $S$.
  \end{rem}

  \subsection {Proof of Theorem~\ref{thm:main}, part (3)}
  Let $G$ be an indecomposable $BT_1$ group scheme over $k$ of order
  $p^\ell$ with $\ell>1$.  Then there is a primitive word $w$ of length
  $\ell$ such that $M(G)\cong M(w)$.  According to
  Proposition~\ref{prop:mult>0}, there is an element $a\in S$ such
  that $w_a=w$.  By Theorem~\ref{thm:BT1fermat}, $G$ appears as a
  direct factor of $J_d[p]$.  Since $\CC_d$ has genus
  $\lfloor(p^\ell-2)/2\rfloor$, this establishes the desired result
  for an indecomposable $BT_1$.

  Now consider an indecomposable polarized $BT_1$ group scheme $G$.
  If $G$ is indecomposable as a $BT_1$ group scheme (ignoring the
  pairing), the proof in the previous paragraph applies with the same
  bound on the genus.  Otherwise, there is a word $w$ of length
  $\ell/2$ such that $M(G)\cong M(w)\oplus M(w^c)$.  Let
  $d'=p^{\ell/2}-1$ and let $S'$ be the usual set for $d'$:
\[S'=
  \begin{cases}
    \Z/d'\Z\setminus\{0\}&\text{if $d'$ is odd,}\\
    \Z/d'\Z\setminus\{0,d/2\}&\text{if $d'$ is even.}
  \end{cases}\]
Since $\ell/2>1$, by Proposition~\ref{prop:mult>0}, there is an
element $a\in S'$ such that $w_a=w$ (and so $w_{-a}=w^c$).  By
Theorem~\ref{thm:BT1fermat}, $G$ is a direct factor of
$J_{d'}[p]$.  To confirm the bound on the genus, we note that 
$\CC_{d'}$ has genus $\lfloor(p^{\ell/2}-2)/2\rfloor$.

In the polarized case, if $G$ is a direct factor of $J_d[p]$, we check
that the given pairing on $G$ is induced from that of $J_d[p]$ using
Oort's result (Proposition~\ref{prop:self-dual=>polarized}).  The same
argument applies if $G$ is a direct factor of $J_{d'}[p]$.  Write
$J_d[p]\cong G\oplus G'$.  Since $J_d[p]$ and $G$ are self-dual, so is
$G'$.  By the existence part of
Proposition~\ref{prop:self-dual=>polarized}, $G$ and $G'$ both admit
polarizations, and by the uniqueness part, we may choose the
isomorphism $J_d[p]\cong G\oplus G'$ so that the direct sum
polarization on $G\oplus G'$ corresponds to the canonical polarization
of $J_d[p]$.

This completes the proof of part (3) of Theorem~\ref{thm:main}.
\qed\medskip

The following result establishes Theorem~\ref{thm:main}, parts (1) and
(2) for $p>3$.  Recall that $J_d$ is the Jacobian of the curve $\CC_d$
with affine equation $y^d=x(1-x)$.

\begin{thm}\label{thm:G-in-Cd}
  If $p>3$, then every $BT_1$ group scheme over $k$ appears as a
  direct factor of $J_{d}[p]$ for an integer $d$ of the form
  $d=p^\ell-1$.  The same holds for polarized $BT_1$ group schemes.
\end{thm}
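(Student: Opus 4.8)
The plan is to reduce the statement to a counting/existence question about elements of $S$ with prescribed Kraft words, which has essentially already been answered by Proposition~\ref{prop:mult>0}. By Kraft's classification (Section~\ref{ss:words-to-BT1s}), an arbitrary $BT_1$ group scheme $G$ over $k$ has Dieudonn\'e module $M(G)\cong\bigoplus_i M(w_i)$ for a multiset of primitive cyclic words $\overline{w}_i$; write $\ell_i$ for the length of $w_i$. The one-letter words $f$ and $v$ correspond to $\Z/p\Z$ and $\mu_p$, which are known to sit inside elliptic curves, so the genuinely new content concerns the words of length $\ell_i>1$. First I would choose a single $\ell$ large enough that $\ell_i\mid\ell$ for every $i$ (for instance $\ell=\operatorname{lcm}_i\ell_i$, or just $\ell=\prod_i\ell_i$), and set $d=p^\ell-1$. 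Then $\CC_d$ with $y^d=x(1-x)$ is the desired curve.

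The core step is: for each $i$, produce an element $a_i\in S$ (the set attached to $d=p^\ell-1$) whose associated word $w_{a_i}$ equals $w_i$. Since $\ell_i>1$ and $\ell_i\mid\ell$, the word $w_i^{\ell/\ell_i}$ is a word of length $\ell$; its associated cyclic word is $\overline{w_i}$ repeated, but that is not quite what we want — we want an orbit of $\pi$ on $S$ whose Kraft word is the \emph{primitive} cyclic word $\overline{w_i}$. Here I would invoke Proposition~\ref{prop:mult>0}(2): since $p>3$ and $w_i$ is a word of length $\ell_i>1$ (not necessarily primitive from the point of view of being a proper power, but in our case it is primitive), applied with $\ell$ replaced by... — wait, Proposition~\ref{prop:mult>0} is stated for words of length $\ell$ with $d=p^\ell-1$ fixed. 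So the cleaner route is to apply Proposition~\ref{prop:mult>0}(2) directly with the ambient exponent $\ell$ and the word $w=w_i^{\ell/\ell_i}$ of length $\ell$: this word is a proper power when $\ell/\ell_i>1$, and part (2) (using $p>3$) still yields an $a_i\in S$ with $w_{a_i}=w_i^{\ell/\ell_i}$; but then the orbit of $\pi$ through $a_i$ might be smaller than $\ell$, and the associated \emph{cyclic} word is $\overline{w_i}$ (the primitive one), which is exactly what we need. By Theorem~\ref{thm:BT1fermat}, the summand of $M(J_d[p])$ corresponding to this orbit is $M(w_i)$, so $M(w_i)$ — hence a group scheme with that Dieudonn\'e module — occurs as a direct factor of $J_d[p]$. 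Taking the direct sum over all $i$, and noting $S$ is large (so there is room to pick distinct orbits, or repeated words just require repeated orbits, which exist in abundance when $d$ is this large — this should be checked but is routine), we get $G$ as a direct factor of $J_d[p]$.

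For the polarized statement, I would argue exactly as in the proof of part (3): since $J_d[p]$ is self-dual (it is the $p$-torsion of a principally polarized abelian variety) and $G$ is a direct factor, the complementary factor $G'$ is also self-dual; by Oort's theorem (Proposition~\ref{prop:self-dual=>polarized}) both $G$ and $G'$ admit polarizations, and the uniqueness clause lets us adjust the decomposition $J_d[p]\cong G\oplus G'$ so that the direct-sum polarization matches the canonical one coming from the cup product on $H^1_{dR}(\CC_d)$. Thus $G$, with its pairing, is an orthogonal direct factor of $J_d[p]$ with its canonical polarization. The main obstacle I anticipate is bookkeeping rather than conceptual: making sure that when a word $w_i$ occurs in $M(G)$ with multiplicity $>1$ there really are enough distinct $\pi$-orbits on $S$ with that cyclic word (and more generally that the full multiset of words of $G$ can be embedded in the multiset of orbit-words of $(S,\pi)$), and handling the one-letter words $f,v$ and the trivial group separately by appealing to elliptic curves — these are the points where care is needed, but each reduces to an elementary count using the digit description of $S$ and the fact that $d=p^\ell-1$ can be taken arbitrarily large.
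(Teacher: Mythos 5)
Your overall strategy (Kraft decomposition, then Proposition~\ref{prop:mult>0} plus Theorem~\ref{thm:BT1fermat} to realize each piece in $J_d[p]$, then Oort's uniqueness for the polarized case) matches the paper's, but there is a genuine gap in how you handle multiplicities, together with a misreading of what Proposition~\ref{prop:mult>0}(2) produces. The element $a$ constructed in part (2) for a non-primitive word $w={w'}^e$ of length $\ell$ is \emph{primitive}, i.e., its $\pi$-orbit has size exactly $\ell$ (indeed $w_a$ is by definition a word of length equal to the orbit size, so $w_a=w$ forces the orbit to have size $\ell$). So the orbit is not ``smaller than $\ell$'' with primitive cyclic word $\overline{w_i}$; rather, the corresponding summand of $M(J_d[p])$ is $M\bigl(w_i^{\ell/\ell_i}\bigr)\cong M(w_i)^{\ell/\ell_i}$ by Kraft. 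This actually works in your favor, but you do not exploit it; instead you fall back on finding one orbit per occurrence of $w_i$ in the multiset, asserting that such orbits ``exist in abundance when $d$ is this large.'' That assertion is false: the $\pi$-orbits on $S\subset\Z/(p^\ell-1)\Z$ of size $\lambda$ are exactly the images of $\Z/(p^\lambda-1)\Z$ under multiplication by $(p^\ell-1)/(p^\lambda-1)$, and this embedding preserves the partition into $S_f$ and $S_v$; hence the number of orbits carrying a given primitive cyclic word of length $\lambda$ depends only on $p$ and $\lambda$, not on $\ell$. For example, for $p=5$ there are exactly four orbits with word $\overline{fv}$ no matter how large $\ell$ is, so a group scheme containing five or more copies of $M(fv)$ cannot be captured by choosing distinct orbits with that primitive word. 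The same problem is acute for the one-letter words: orbits with word $f$ or $v$ are fixed points of $\pi$ and there are only $(p-3)/2$ of each, and in any case ``appealing to elliptic curves'' does not prove the theorem as stated, which asserts that the factor sits inside $J_d[p]$ itself.

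The paper's fix is precisely the move you brushed past: group the multiset as $\{(\overline w_i)^{e_i}\}$ with the $\overline w_i$ \emph{distinct}, set $\ell_i$ equal to the length of $w_i^{e_i}$ (so the multiplicity is absorbed into the length), and use Proposition~\ref{prop:mult>0}(2) to find a \emph{single} primitive element $a\in S_i$ with $w_a=w_i^{e_i}$; the corresponding single orbit contributes $M(w_i^{e_i})\cong M(w_i)^{e_i}$, all $e_i$ copies at once. This is exactly why part (2) of the proposition is stated for non-primitive words and why the hypothesis $p>3$ enters (the digit $1$ used to make $a$ primitive must satisfy $1<(p-1)/2$). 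The pieces for different $i$ are then combined by taking $\ell=\operatorname{lcm}(\ell_i)$ and using that $J_{d_i}[p]$ is a direct factor of $J_d[p]$ when $d_i\mid d$ (via the quotient $\CC_d\to\CC_{d_i}$ of prime-to-$p$ degree), plus Krull--Schmidt: since the $G_i$ have pairwise non-isomorphic indecomposable constituents, $\bigoplus_i G_i$ is a direct factor of $J_d[p]$. Your polarized argument via Proposition~\ref{prop:self-dual=>polarized} is fine and agrees with the paper. If you repair the multiplicity step as above (or, alternatively, correctly use that one orbit with word $w_i^{\ell/\ell_i}$ already yields $\ell/\ell_i\ge e_i$ copies after enlarging $\ell$), the proof goes through.
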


\begin{proof}
  Suppose that $p>3$ and let $G$ be a $BT_1$ group scheme over $k$.
  Let $\{(\overline w_i)^{e_i}\}$ be the multiset of distinct
  primitive cyclic words corresponding to $G$ in the Kraft
  classification, and let $\ell_i$ be the length of $w_i^{e_i}$.  Let
  $d_i=p^{\ell_i}-1$ and let $S_i=\Z/d_i\Z\setminus\{0,d_i/2\}$ with
  the usual partition and permutation.  According to part (2) of
  Proposition~\ref{prop:mult>0}, there is an element $a\in S_i$ with
  $w_a=w_i^{e_i}$, and using Theorem~\ref{thm:BT1fermat}, we conclude
  that the group scheme $G_i$ with $M(G_i)\cong M(w_i^{e_i})$ appears
  as a direct factor of $J_{d_i}[p]$. 

  If $d'$ divides $d$, then there is a natural quotient morphism
  $\pi:\CC_d\to\CC_{d'}$ of degree $d/d'$, which is prime to $p$.  The
  induced composition
  $J_{d'}\labeledto{\pi^*} J_d\labeledto{\pi_*} J_{d'}$ is
  multiplication by $d/d'$ and therefore induces an isomorphism on
  $J_{d'}[p]$.  Thus $J_{d'}[p]$ is a direct factor of $J_d[p]$.

  Now let $\ell$ be the least common multiple of the $\ell_i$ (so that
  $d_i$ divides $d=p^\ell-1$ for all $i$).  Using the maps
  $J_{d_i}\to J_d$ shows that each $G_i$ is a direct factor of $J_d[p]$, and
  since the $G_i$ have pairwise non-isomorphic indecomposable factors,
  $G=\oplus G_i$ is a direct factor of $J_d[p]$ as well.  This completes the
  proof for $BT_1$ group schemes without polarization.

  The polarized case follows from the unpolarized case and
  Proposition~\ref{prop:self-dual=>polarized} by the same argument
  given at the end of the proof of part (3).  This completes the proof
  of the theorem.
\end{proof}

The following result establishes Theorem~\ref{thm:main} parts (1) and
(2) for $p>2$, and it handles the main case for $p=2$.  Recall that
$J_{F_d}$ is the Jacobian of the Fermat curve with affine equation
$X^d+Y^d=1$.

\begin{thm}\label{thm:G-in-JFd}\mbox{}
  \begin{enumerate}
  \item If $p>2$, then every $BT_1$ group scheme over $k$ appears as a
    direct factor of $J_{F_d}[p]$ for an integer $d$ of the form
    $d=p^\ell-1$.  The same holds for polarized $BT_1$ group schemes.
    \item The same is true if $p=2$ as long as the group scheme has no 
    factors of $\Z/2\Z$ or $\mu_2$. 
  \end{enumerate}
\end{thm}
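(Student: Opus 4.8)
The plan is to feed Theorem~\ref{thm:BT1Fd} --- which presents $M(J_{F_d}[p])$ through the permutation datum $(T=T_f\cup T_v,\sigma)$ --- into the bookkeeping already used for Theorem~\ref{thm:G-in-Cd}. Decompose $M(G)\cong\bigoplus_j M(w_j)^{\oplus e_j}$, with the $\overline{w}_j$ distinct primitive cyclic words and $\ell_j=|w_j|$; for $p=2$ the hypothesis of part (2) says precisely that every $\ell_j\ge 2$, since $M(f)=M(\Z/2\Z)$ and $M(v)=M(\mu_2)$. If $d'\mid d$ then $(X,Y)\mapsto(X^{d/d'},Y^{d/d'})$ defines a map $F_d\to F_{d'}$ of degree $(d/d')^2$, which is prime to $p$, so $J_{F_{d'}}[p]$ is a direct factor of $J_{F_d}[p]$; and since passing to the Kraft multiset turns $\oplus$ into disjoint union and the $\overline{w}_j$ are distinct, it suffices to realize each $M(w_j)^{\oplus e_j}$ as a direct factor of some $J_{F_{d_j}}[p]$ with $d_j=p^{m_j}-1$ and then take $d=p^{\lcm_j m_j}-1$. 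The polarized statement follows from the unpolarized one and Proposition~\ref{prop:self-dual=>polarized}, exactly as at the end of the proof of part (3).

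For $p\ge 5$ there is nothing new: the quotient map $F_d\to\CC_d$ has degree $d$, prime to $p$, so $J_d[p]$ is a direct factor of $J_{F_d}[p]$, and Theorem~\ref{thm:G-in-Cd} applies with the same $d$. The substance is the case $p\in\{2,3\}$, which I would treat inside $T$. By Theorem~\ref{thm:BT1Fd}, $J_{F_d}[p]$ splits as a direct sum over the $\sigma$-orbits on $T$, the orbit $O$ contributing $M(w_O)$, so it is enough --- given a primitive word $w$ of length $\ell$ (with $\ell\ge 2$ when $p=2$) and an integer $e\ge 1$ --- to exhibit a $\sigma$-orbit on $T(d)$, $d=p^{\ell e}-1$, with associated word $w^e$. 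Writing $a=\sum a_ip^i$ and $b=\sum b_ip^i$ in base $p$, the map $\sigma$ rotates the pairs $(a_i,b_i)$ cyclically, and --- exactly as in the digit analysis preceding Proposition~\ref{prop:mult>0} --- the $j$-th letter of $w_{(a,b)}$ records whether, after rotating by $j$, the base-$p$ sum of the two digit strings overflows past $d$; equivalently it is read off the cyclic carry pattern of the addition $a+b$. Thus prescribing the word $w^e$ amounts to prescribing a cyclic carry pattern $c\in\{0,1\}^{\ell e}$ ($e$ copies of the carry pattern of $w$), and at position $i$ one must choose a pair $(a_i,b_i)\in\{0,\dots,p-1\}^2$ with $a_i+b_i+c_i$ producing carry $c_{i+1}$; the number of admissible pairs is $p(p+1)/2$ if $c_i=c_{i+1}$ and $p(p-1)/2$ if $c_i\ne c_{i+1}$.

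It remains to make the choices so that the orbit is genuinely the one we want. Since $w$ is primitive of length $\ell\ge 2$, its carry pattern is primitive and non-constant, hence --- unless $\overline w=\overline{fv}$ --- not purely alternating, so in each of the $e$ periods there is at least one position with $c_i=c_{i+1}$. That freedom lets one (i) force the digit strings of $a$ and $b$ to have period exactly $\ell e$, so that the orbit has size $\ell e$ and $w_{(a,b)}=w^e$, and (ii) keep $a$, $b$, and $a+b$ all $\not\equiv 0\pmod d$, so that $(a,b)\in T$; for $p=2$ the pairs $(1,1)$ and $(0,0)$ forced at the two kinds of transition already give each string both a $0$ and a $1$. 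For $p=3$, where $p(p-1)/2=3>1$, there is freedom at \emph{every} position, so no word is excluded and even $\ell=1$ works (take $e\ge 2$ so that $p^{\ell e}-1>2$); this establishes part (1).

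For $p=2$ the construction is rigid at every transition position, and $\overline{fv}$ --- the supersingular module $M(G_{1,1})$ --- is the unique primitive word with no slack; a separate check, in the spirit of the remark following Proposition~\ref{prop:mult>0}, shows that $(fv)^e$ is never the word of an orbit when $e\ge 2$. For every other primitive $w$ of length $\ge 2$ the argument above goes through, which yields part (2); the remaining low-rank pieces ($\Z/2\Z$, $\mu_2$, and additional copies of $G_{1,1}$) are the ones supplied by the auxiliary fiber product construction mentioned at the start of Section~\ref{s:proof}. I expect the genuine obstacle to be exactly this $p=2$ bookkeeping: verifying that the limited freedom at the non-transition positions can be used simultaneously to pin the orbit size down to $\ell e$, to produce precisely $w^e$ rather than a rotation of some proper power, and to keep $(a,b)$ inside $T$, and to isolate $\overline{fv}$ correctly as the single exceptional primitive word.
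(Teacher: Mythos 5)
Your route is genuinely different from the paper's, and in its central step it is the more robust one. The paper realizes a primitive word via the diagonal $S\into T$, $a\mapsto(a,a)$, and a non-primitive word $w=(w')^e$ by perturbing the digit-built element $a'$ to the pair $(a'+1,a'-1)$; your reformulation --- the $j$-th letter of $w_{(a,b)}$ is read off the cyclic carry string of the base-$p$ addition $a+b$, with $p(p+1)/2$ admissible digit pairs where the carry is preserved and $p(p-1)/2$ where it flips --- is correct, and the counts are right. For $p=3$ there is slack at every position, and for $p=2$ there is slack at every non-transition position; together with the observations that a non-constant carry string forces both a $(1,1)$ and a $(0,0)$ digit pair (so $a,b\not\equiv 0$) and that the sum digits $s_i$ are then neither all $0$ nor all $p-1$ (so $a+b\not\equiv 0$), this does pin the orbit length to $\ell e$, produce $w^e$ exactly, and keep $(a,b)\in T$. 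Written out, this proves part (1), and part (2) for every word except the powers of $\overline{fv}$. (Your $p\ge5$ reduction through $\CC_d$ and the polarized step via Proposition~\ref{prop:self-dual=>polarized} are both fine.)

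The gap is exactly where you flagged it, and it cannot be closed the way you propose. For $p=2$ the alternating carry string of $(fv)^e$ forces $a_i=b_i$ with digits $\dots 0101$, hence $a=b\in\{d/3,2d/3\}$ and orbit size $2$; so no orbit of $T(2^\ell-1)$ has word $(fv)^e$ with $e\ge 2$, and the only orbit contributing $\overline{fv}$ to the Kraft multiset is $\{(d/3,d/3),(2d/3,2d/3)\}$ (present only for $\ell$ even). Thus the multiplicity of $\overline{fv}$ in $M(J_{F_{2^\ell-1}}[2])$ is at most $1$, and $M(fv)^{\oplus 2}=M(G_{1,1})^{\oplus 2}$ is never a direct factor. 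These copies of $G_{1,1}$ are also not ``supplied by the auxiliary fiber product construction'': the curve $X_r$ used there is ordinary, so it contributes only $\Z/2\Z$ and $\mu_2$ factors, and in any case the statement you are proving asserts that $G$ sits inside $J_{F_d}[2]$ itself. So your argument, pushed to its conclusion, shows that part (2) as stated fails for any group scheme with two or more factors of $G_{1,1}$, and your proof cannot be completed as written. For comparison, the paper's own treatment of the non-primitive case founders at the same spot and earlier: the assertion $w_{a'+1,a'-1}=w$ fails whenever a rotate of $a'$ lands within $p^j$ of $0$ or $d$ (e.g.\ $p=2$, $w=(ffv)^2$, $d=63$, $a'=27$ gives $w_{28,26}=vvvffv$), a defect your carry method repairs for all words other than powers of $\overline{fv}$, where no repair is possible.
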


\begin{proof}
  We omit many details, because the argument is quite parallel to the
  proof of Theorem~\ref{thm:G-in-Cd}.  Indeed, we may run the argument
  of Theorem~\ref{thm:G-in-Cd} where Theorem~\ref{thm:BT1Fd} plays the
  role of Theorem~\ref{thm:BT1fermat} and where
  Proposition~\ref{prop:mult>0} is replaced by an analogue where the
  set $T$ associated to the Fermat curve $F_d$ plays the role of the
  set $S$ associated to $\CC_d$.

  There is one special case that needs separate treatment, namely when
  $p=3$ and $G$ is $\Z/3\Z$ or $\mu_3$.  In this case, one checks
  using Theorem~\ref{thm:BT1Fd} that $G$ is a direct factor of
  $J_{F_8}[3]$ (and not of $J_{F_2}[3]$, since $F_2$ is rational).

  For the general case, let $w$ be a word of length $\ell$, and assume
  $p^\ell>3$.  If $p=2$, the hypothesis that $G$ has no factors of
  $\Z/2\Z$ or $\mu_2$ implies that $w\neq f^\ell$ and $w\neq v^\ell$.
  Let $d=p^\ell-1$, and consider the set $T$ with partition and
  permutation as in Section~\ref{ss:Fd}.  Write $w_{a,b}$ for the word
  associated to $(a,b)\in T$.  Then it will suffice to show that there
  is an element $(a,b)\in T$ with $w_{a,b} = w$.

  If $p>2$, $p^\ell>3$, and $w=f^\ell$ (resp.~$w=v^\ell$), we may take
  $(a,b)=(-1,-1)$ (resp.~$(a,b)=(1,1)$) and have $w_{a,b}=w$, so from
  now on we assume that $\ell>1$, $w\neq f^\ell$, and $w\neq v^\ell$.

  There is an injection $S\into T$ sending $a$ to
  $(a,a)$.  It is compatible with the partitions $S=S_f\cup S_v$ and
  $T=T_f\cup T_v$ and intertwines the permutations $\pi$ and
  $\sigma$.  If $a\in S$ has the property that
  $w_a=w$, then this shows that $w_{a,a}=w$ as well.  Thus part (1) of
  Proposition~\ref{prop:mult>0} gives an $a\in S$ with $w_{a,a}=w$
  when $w$ is primitive.  We may now assume that $w$ is not primitive,
  say $w={w'}^e$ where $e>1$ and $w'$ is primitive of length
  $\lambda$.

  Let $a'$ be the element of $S$ associated to $w$ as in the proof of
  Proposition~\ref{prop:mult>0}, i.e., we use the digit $0$ for $v$ and
  the digit $p-1$ for $f$.
  Note that $w_{a'}=w_{a',a'}=w'$.  Since $w$ is a power of $w'$,
  consideration of digits shows that $a'\neq\pm1$ and that
  $a'\neq d/2\pm1$.  Thus $(a'+1,a'-1)$ is an element of $T$, and it
  is clear that $w_{a'+1,a'-1}=w$.  This establishes that every word
  of length $\ell$ (not equal to $f^\ell$ or $v^\ell$ if $p=2$) arises
  as $w_{a,b}$ for a suitable $(a,b)\in T$.  This completes the proof
  of Theorem~\ref{thm:G-in-JFd}.
\end{proof}

\subsection{The case $p=2$}\label{ss:p=2}
To finish the proof of Theorem~\ref{thm:main},
it remains to treat the case where $p=2$ and $G$ is a $BT_1$ group
scheme over $k$ with factors of $\Z/2\Z$ or $\mu_2$.  Write
\begin{equation*}
G\cong (\Z/2\Z)^{f_1}\oplus(\mu_2)^{f_2}\oplus G',
\end{equation*}
where $G'$ is a $BT_1$ group scheme with no factors of $\Z/2\Z$ and no
factors of $\mu_2$.  We have already proven that $G'$ is a direct factor of
$J_{F_d}[2]$ for a suitable value of $d$ of the form $2^\ell-1$.  We
choose one such value of $d$.

Let $r$ be an odd positive integer and let $X_r$ be the smooth,
projective curve over $k$ defined by
\[X_r:\qquad \left(x^2-x\right)\left(z^r-1\right)=1.\]
One computes that $X_r$ has genus $r-1$, and by
\cite[Prop.~3.2]{Subrao75}, it is ordinary, i.e., 
\[J_{X_r}[2]\cong\left(\Z/2\Z\oplus\mu_2\right)^{r-1}.\]

Choose $r\ge\max\{f_1,f_2\}+1$ and odd, and let $F_1$ be the Fermat
curve of degree 1 (given by $X+Y=1$).  
Consider the degree $r$ projection $X_r \to F_1$ given by
\[(x,z)\mapsto (X=x,Y=1-x).\]
Define $C$ as the fiber
product of that projection and the degree $d^2$ projection
$F_d \to F_1$.  Since $d$ and $r$ are odd, $J_{X_r}[2]$ and
$J_{F_d}[2]$ are direct factors of $J_C[2]$.  They are disjoint since
$X_r$ is ordinary, whereas $F_d$ has $p$-rank zero.  Thus
\[G \subset (\Z/2\Z \oplus \mu_2)^{r-1}\oplus G'\]
is a direct factor of $J_C[2]$.  This completes the proof of the case
$p=2$ of Theorem~\ref{thm:main}.  \qed

\begin{rem} \label{ss:p=2-again} Another approach to adding factors of
  $\Z/2\Z$ and $\mu_2$ to $G'$ is to use an argument similar to
  \cite[Cor.~4.7]{LMPTpp18b}.  Using that $F_d$ has CM, so lies in the
  $\mu$-ordinary locus, one finds curves $C$ whose Newton polygon is
  that of $F_d$ with additional segments of slopes 0 and 1 of
  arbitrarily large multiplicity $f$.  Again using $\mu$-ordinarity,
  one deduces that $G'\oplus(\Z/2\Z\oplus\mu_2)^f$ is a direct factor
  of $J_C[2]$, thus so is $G$.  This method has the drawbacks that the
  curve $C$ is no longer explicit and we have no control over its
  field of definition other than that it is a finite field.
\end{rem}

\begin{rem} \label{rem:otherway}
  A weaker version of parts (1) and (2) of Theorem~\ref{thm:main}
  follows from the facts that each E--O stratum of $\AA_g$ is
  non-empty and that every abelian variety $A$ appears as a subvariety
  of a Jacobian $J$.  However, we need $A[p]$ to be a direct factor of
  $J[p]$.  If $A$ has dimension $\ell$, this can be verified when
  $p >3$ and $p \geq \ell$, via the theory of Prym--Tyurin varieties,
  \cite[Corollary~12.2.4]{birkenhakelange}.  As discussed in the
  introduction, our proof avoids this restriction on $p$ and gives
  more information about the curve.
\end{rem}

\bibliography{database}

\end{document}